\let\margin\marginpar
\newcommand\myMargin[1]{\margin{\raggedright\scriptsize #1}}
\renewcommand{\marginpar}[1]{\myMargin{#1}}
\newtheorem{lemma}{Lemma}[section]
\newtheorem{theorem}[lemma]{Theorem}
\newtheorem{prop}[lemma]{Proposition}
\theoremstyle{definition}
\newtheorem{definition}[lemma]{Definition}
\newtheorem{remark}[lemma]{Remark}
\theoremstyle{remark}
\newtheorem*{proof*}{Proof}
\numberwithin{equation}{section}
\def\Hom{{\mathrm{Hom}}}
\def\CC{{\mathbb C}}
\def\cG{{\mathcal{G}}}
\def\fm{{\mathfrak{m}}}
\def\lgg{{\langle\langle}}
\def\rgg{{\rangle\rangle}}
\def\ol{\overline}
\def\ot{{\otimes}}
\def\wh{\widehat}
\def\Lm{{\Lambda}}
\def\p{{\prime}}
\def\pp{{\prime\prime}}
\def\1{{\bf{1}}}
\def\cy{{\mathrm{cyc}}}
\def\Aut{{\mathrm{Aut}}}
\def\id{{\mathrm{Id}}}
\def\der{{\mathrm{Der}}}
\def\cder{{\mathrm{cDer}}}
\def\dder{{\mathrm{\mathbb{D}er}}}
\def\im{{\mathrm{im}}}
\def\rJ{{\mathrm{J}}}
\def\Ad{{\mathrm{Ad}}}
\title{Quasi-homogeneity of superpotentials}
\date{}
\author[1]{Zheng Hua\thanks{huazheng@maths.hku.hk}}
\author[21]{Gui-Song Zhou \thanks{10906045@zju.edu.cn}}
\affil[1]{Department of Mathematics, the University of Hong Kong, Hong Kong SAR, China}
\affil[2]{Department of Mathematics, Ningbo University, Ningbo, China}
\begin{document}
\maketitle
\begin{abstract}
In this article, we study the quasi-homogeneity of a superpotential in a complete free algebra over an algebraic closed field of characteristic zero. We prove that a superpotential with finite dimensional Jacobi algebra is right equivalent to a weighted homogeneous superpotential if and only if the corresponding class in the 0-th Hochschlid homology group of the Jacobi algebra is zero. This result can be viewed as a noncommutative version of the famous theorem of Kyoji Saito on isolated hypersurface singularities.
\end{abstract}

\section{Introduction}
\newtheorem{maintheorem}{\bf{Theorem}}
\renewcommand{\themaintheorem}{\Alph{maintheorem}}
\newtheorem{mainconjecture}[maintheorem]{\bf{Conjecture}}
\renewcommand{\themainconjecture}{}

This is our second paper studying the Jacobi-finite superpotentials, after \cite{HZ}. Let $F=k\lgg x_1,\ldots,x_n\rgg$ be a complete free algebra over a field $k$. A \emph{superpotential} $\Phi$ refers to an element in the vector space $F_{\ol{\cy}}$ consisting of elements of $F$ modulo cyclic permutations. The cyclic derivative $D_i\Phi$ of $\Phi$ with respect to $x_i$ is an element in $F$. To every $\Phi$, we can associate to it an associative algebra $\Lm(F,\Phi)$, defined to be the quotient of $F$ by the closed two sided ideal generated by $D_i\Phi$ for $i=1,\ldots,n$. We call $\Lm(F,\Phi)$ the \emph{Jacobi algebra} (or superpotential algebra) associated to $F$ and $\Phi$. The Jacobi algebra is an invariant of the superpotential. It is natural to ask to what extent the superpotential is determined by its Jacobi algebra. 

The natural projection from $F$ to $\Lm(F,\Phi)$ induces a natural map from $F_{\ol{\cy}}$ to $\Lm(F,\Phi)_{\ol{\cy}}$. We denote the image of $\Phi$ under this map by $[\Phi]$. In \cite{HZ}, we proved that:
\begin{theorem}(\cite[Theorem B]{HZ})\label{MY}
Assume that $k=\CC$.
Let $\Phi,\Psi\in F_{\ol{\cy}}$ be two superpotentials of order $\geq 3$. Suppose that the superpotential algebras $\Lm(F ,\Phi)$ and $\Lm(F ,\Psi)$ are both finite dimensional.
Then the following two statements are equivalent:
\begin{enumerate}
\item[$(1)$] There is an algebra isomorphism $\gamma : \Lm(F , \Phi)\cong \Lm(F , \Psi)$ so that $\gamma_*([\Phi]) = [\Psi]$. 
\item[$(2)$] $\Phi$ and $\Psi$ are right equivalent.
\end{enumerate}
\end{theorem}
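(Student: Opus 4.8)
The statement has an easy direction and a substantial one. For $(2)\Rightarrow(1)$, my plan is purely formal: if $\phi\in\Aut(F)$ is a continuous automorphism with $\phi(\Phi)=\Psi$ in $F_{\ol{\cy}}$, then the noncommutative chain rule for cyclic derivatives shows that $\phi$ carries the closed two-sided ideal $(D_1\Phi,\dots,D_n\Phi)$ onto $(D_1\Psi,\dots,D_n\Psi)$, so $\phi$ descends to an algebra isomorphism $\gamma\colon\Lm(F,\Phi)\xrightarrow{\sim}\Lm(F,\Psi)$, and $\gamma_*([\Phi])=[\phi(\Phi)]=[\Psi]$ by construction. All the work is in $(1)\Rightarrow(2)$, which I would treat as a noncommutative Mather--Yau statement and attack by a lifting reduction together with the homotopy (path) method.

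\medskip
\noindent\textbf{Step 1 (reduction to a common Jacobi ideal).} Because $F$ is a complete free algebra, $\gamma$ lifts to a continuous automorphism $\tilde\gamma\in\Aut(F)$: send $x_i$ to a preimage with zero constant term of $\gamma(\bar x_i)$; the linear part of $\tilde\gamma$ is that of $\gamma$, hence invertible. Comparing $F\to\Lm(F,\Phi)\xrightarrow{\gamma}\Lm(F,\Psi)$ with $F\xrightarrow{\tilde\gamma}F\to\Lm(F,\Psi)$ and using that $\gamma$ is injective yields $\tilde\gamma^{-1}\big((D_i\Psi)\big)=(D_i\Phi)$; applying the chain rule to $\Psi':=\tilde\gamma^{-1}(\Psi)$ then gives $(D_i\Psi')=(D_i\Phi)$. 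Since $\Psi'$ is right equivalent to $\Psi$, we may replace $\Psi$ by $\Psi'$ and thus assume $(D_1\Phi,\dots,D_n\Phi)=(D_1\Psi,\dots,D_n\Psi)=:\rJ$, $\Lm:=F/\rJ$, with $\gamma=\id$, hence $[\Phi]=[\Psi]$ in $\Lm_{\ol{\cy}}$; equivalently $\Psi-\Phi=\sum_i c_i\,(D_i\Phi)$ in $F_{\ol{\cy}}$ for some $c_i\in F$ (and, using the order $\geq 3$ hypothesis, one can take $c_i$ in the augmentation ideal $\fm$). Finite dimensionality of $\Lm$ gives $\fm^N\subseteq\rJ$ for some $N$, so by the standard finite determinacy argument I may also assume $\Phi$ and $\Psi$ are polynomials.

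\medskip
\noindent\textbf{Step 2 (path method).} Put $\Phi_t:=(1-t)\Phi+t\Psi$ for $t\in[0,1]$. The goal is a family $\xi_t$ of continuous derivations of $F$ with $\xi_t(x_i)\in\fm$, depending polynomially on $t$, solving the homological equation
\[
\Psi-\Phi\;=\;\frac{d}{dt}\Phi_t\;=\;\xi_t(\Phi_t)\;=\;\sum_i \xi_t(x_i)\,(D_i\Phi_t)\qquad\text{in }F_{\ol{\cy}}.
\]
Integrating the (suitably oriented) non-autonomous flow of $\xi_t$, which exists because each $\xi_t$ acts nilpotently on $\fm$-adic graded pieces, produces $\sigma_t\in\Aut(F)$ with $\sigma_0=\id$ and $\sigma_t(\Phi)=\Phi_t$ in $F_{\ol{\cy}}$; then $\sigma_1(\Phi)=\Psi$, and with $\Psi'\sim\Psi$ this gives $\Phi\sim\Psi$. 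For a fixed $t$, solvability of the homological equation is precisely the vanishing of $[\Psi-\Phi]$ in $\Lm(F,\Phi_t)_{\ol{\cy}}$, and at $t=0$ this is exactly the hypothesis $[\Phi]=[\Psi]$ that survived Step 1.

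\medskip
\noindent\textbf{Step 3 (the main obstacle: propagating solvability along the path).} The crux is to show $[\Psi-\Phi]=0$ in $\Lm(F,\Phi_t)_{\ol{\cy}}$ for \emph{every} $t$. I would deduce this from the stronger claim that the Jacobi ideal is constant along the path, $(D_1\Phi_t,\dots,D_n\Phi_t)=\rJ$ for all $t$: then $\Lm(F,\Phi_t)=\Lm$ and $\ker\big(F_{\ol{\cy}}\to\Lm_{\ol{\cy}}\big)$ does not move, so the $t=0$ conclusion is inherited verbatim. As $D_i\Phi_t=D_i\Phi+t\,D_i(\Psi-\Phi)$ and $\dim_k\Lm<\infty$, it suffices to prove $D_i(\Psi-\Phi)\in\rJ$ for all $i$. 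Writing $\Psi-\Phi=\xi(\Phi)$ in $F_{\ol{\cy}}$ for a derivation $\xi$ with $\xi(x_j)=c_j$, the Leibniz-type identity
\[
D_i\big(\xi(\Phi)\big)\;=\;\xi\big(D_i\Phi\big)+\sum_j\big(\partial_i\,\xi(x_j)\big)\cdot\big(D_j\Phi\big)
\]
(whose last sum lies in $\rJ$) reduces the desired membership to being able to choose $\xi$ so that it \emph{preserves} $\rJ$ --- i.e., descends to a derivation of $\Lm$ --- while still satisfying $\xi(\Phi)=\Psi-\Phi$ in $F_{\ol{\cy}}$. Finding such a $\xi$ inside the affine space of all derivations with $\xi(\Phi)=\Psi-\Phi$ in $F_{\ol{\cy}}$ is, I expect, the genuinely hard point, and it is exactly where the finiteness of $\dim_k\Lm$ must be used: reducing modulo $\fm^N$, one wants surjectivity of the map from $\rJ$-preserving derivations onto the relevant finite-dimensional subquotient of $\ker(F_{\ol{\cy}}\to\Lm_{\ol{\cy}})$, a Nakayama-flavoured statement about the finite-dimensional algebra $\Lm$ (equivalently, structural input about superpotentials with a prescribed finite-dimensional Jacobi algebra). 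Granting that, Steps 1--3 establish $(1)\Rightarrow(2)$, completing the proof.
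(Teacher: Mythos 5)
This theorem is not proved in the present paper at all: it is quoted verbatim from the companion paper \cite{HZ} (Theorem B there), so the only ``proof'' the paper offers is the citation. Your outline is the natural Mather--Yau strategy and your easy direction $(2)\Rightarrow(1)$ is essentially Proposition \ref{Jacobi-transform} plus the observation $\gamma_*([\Phi])=[H(\Phi)]=[\Psi]$; Step 1 is also sound in outline (lifting $\gamma$ uses that $\fm_\Phi$ and $\fm_\Psi$ are the Jacobson radicals of finite-dimensional local algebras, so $\gamma$ preserves them, and the inverse function theorem \cite[Lemma 2.13]{HZ} makes the lift an automorphism). But the proposal does not constitute a proof, because Step 3 --- which you yourself flag with ``I expect'' and ``Granting that'' --- is exactly the substantive content of the noncommutative Mather--Yau theorem, and it is left unestablished.

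Concretely, two things are missing. First, the claim that $\rJ(F,\Phi_t)=\rJ$ for all $t$ does not follow from $\dim_k\Lm<\infty$ together with $D_i(\Psi-\Phi)\in\rJ$ alone: knowing $D_i\Phi_t\in\rJ$ gives only the inclusion $\rJ(F,\Phi_t)\subseteq\rJ$, and the reverse inclusion (equivalently, that the Jacobi algebra does not jump along the path) is a genuine openness statement that must be argued, not read off from finite dimensionality. Second, the existence of a derivation $\xi$ with $\xi(\Phi)=\Psi-\Phi$ in $F_{\ol{\cy}}$ that in addition preserves $\rJ$ is not a ``Nakayama-flavoured'' formality; the affine space of solutions to the homological equation is parametrized by the $c_i$ only up to the kernel of $\Phi_*$ composed with $\pi$, and there is no a priori reason a representative normalizing $\rJ$ exists. (There is also a smaller unaddressed point: you need $\xi_t(x_i)\in\fm$ and at least piecewise-polynomial dependence on $t$ to integrate the flow, which requires choosing the $c_i^{(t)}$ uniformly, e.g.\ by linear algebra over the finite-dimensional quotients $F/\fm^N$.) In \cite{HZ} these difficulties are not resolved by showing the Jacobi ideal is literally constant along the linear interpolation; instead the argument runs through a higher-order perturbation/finite-determinacy lemma (roughly: if $\Psi-\Phi$ lies in the image under $\pi$ of a suitable product of $\fm$ with the Jacobi ideal, then $\Phi\sim\Psi$), to which the general case is reduced after the normalizations of your Step 1. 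Until you supply an argument for the boxed expectation at the end of Step 3, the implication $(1)\Rightarrow(2)$ remains unproved.
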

We call a superpotential \emph{quasi-homogeneous} if  $[\Phi]=0$.
It is easy to check that if $\Phi$ is weighted homogeneous (see Definition \ref{def:wh}) then $[\Phi]=0$, i.e. weighted homogeneous $\Rightarrow$ quasi-homogeneous. By the above theorem, weighted homogeneous superpotentials with finite dimensional Jacobi algebras are completely classified by their Jacobi algebras.   The next question is given an arbitrary superpotential, how to determine whether it is right equivalent to a weighted homogeneous superpotential or not? For a superpotential with finite dimensional Jacobi algebra, we show that it is right equivalent to a weighted homogeneous one if and only if it is quasi-homogeneous.

\begin{theorem}(Theorem \ref{Saito})
Assume that $k$ is an algebraic closed field of zero characteristic.
Let $\Phi\in F_{\ol{\cy}}$ be a  superpotential of order $\geq3$ such that the Jacobi algebra associated to $\Phi$ is  finite dimensional. Then $\Phi$ is quasi-homogeneous if and only if  $\Phi$ is right equivalent to a weighted-homogenous superpotential of type $(r_1,\ldots,r_n)$ for some rational numbers $r_1,\ldots,r_n$ lie strictly between $0$ and $1/2$. Moreover, in this case, all such types $(r_1, \ldots, r_n)$ agree with each other up to permutations on the indexes $1,\ldots, n$. 
\end{theorem}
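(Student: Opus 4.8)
The plan is to mimic Kyoji Saito's classical strategy for isolated hypersurface singularities, transported to the noncommutative setting via the formalism already developed in \cite{HZ}. The easy direction (weighted homogeneous $\Rightarrow$ quasi-homogeneous) is noted in the excerpt, so the content is the converse: starting from $[\Phi]=0$, produce a right equivalence to a weighted homogeneous superpotential. First I would set up the relevant complex. Since $\Lm:=\Lm(F,\Phi)$ is finite dimensional, one knows (from the companion paper and the general theory of Jacobi-finite superpotentials) that the ``noncommutative Milnor algebra'' is finite dimensional and that the relevant Koszul-type complex computing $\HoH_0(\Lm)$ and $\HoH_1(\Lm)$ has finite-dimensional cohomology. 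The condition $[\Phi]=0$ says precisely that $\Phi$, viewed in $F_{\ol\cy}$ and pushed to $\Lm_{\ol\cy}=\HoH_0(\Lm)$, vanishes; equivalently $\Phi=\sum_i a_i\, D_i\Phi$ modulo commutators for suitable $a_i\in F$ (closure of the cyclic-derivative ideal in $F_{\ol\cy}$). This is the noncommutative analogue of Saito's $f\in(\partial f/\partial x_1,\dots,\partial f/\partial x_n)$.

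The next step is to convert that algebraic identity into an Euler-type vector field. In the commutative case, Saito writes $f=\sum_i a_i \partial_i f$ and then integrates the vector field $\xi=\sum_i a_i\partial_i$ (after a Poincaré–Dulac-type normalization of its linear part) to a formal change of coordinates under which $f$ becomes homogeneous for the eigenvalue weights of the linearization. Here one works with the Lie algebra of continuous derivations $\der(F)$ of the complete free algebra, and one needs the analogue of the fact that a cyclic derivative identity $\Phi\equiv\sum_i a_i D_i\Phi$ produces a derivation $\xi$ with $\xi(\Phi)=\Phi$ in $F_{\ol\cy}$; this uses the standard relation between the double derivation/cyclic derivative and the contraction pairing (see \cite{HZ} for the identity $\sum_i (D_i\Phi)\,\partial_{x_i} $ acting on cyclic words). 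Then I would analyze the linear part $\xi_1$ of $\xi$: after conjugating by a formal automorphism (Poincaré–Dulac / Jordan–Chevalley over the algebraically closed field $k$ of characteristic zero), arrange $\xi_1$ to be semisimple and diagonal, $\xi_1=\sum_i r_i x_i\partial_{x_i}$. The eigenvalues $r_i$ will be the desired weights. Positivity and the bound $0<r_i<1/2$ should come from finite-dimensionality of $\Lm$ together with order $\geq 3$ of $\Phi$: nontrivial negative or zero weights would force infinitely many monomials in the Jacobi algebra or contradict the order hypothesis, exactly as in Saito's argument, while $r_i<1/2$ follows because $\Phi$ has no quadratic part (order $\geq 3$) so each variable appears in a cyclic monomial of total weight $1$ together with at least two other positive-weight letters.

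Having diagonalized the linear part, the remaining task is the homotopy/integration argument: one shows the higher-order terms of $\xi$ can be killed by a further formal automorphism, so that $\xi$ itself becomes the Euler field $\sum r_i x_i\partial_{x_i}$, and then $\xi(\Phi)=\Phi$ literally says $\Phi$ is weighted homogeneous of type $(r_1,\dots,r_n)$. Concretely I would run an induction on the degree filtration of $F_{\ol\cy}$: at each stage the obstruction to removing the next graded piece of $\xi-\xi_1$ lives in a cohomology group of the Koszul complex of the linear field, which vanishes because the weights $r_i$ are positive rationals (no resonances of the relevant sign), and the finite-dimensionality of $\Lm$ guarantees the process stabilizes / converges in the complete topology. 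For the uniqueness of the type up to permutation: the weights $r_i$ are recovered intrinsically from $(\Lm,[\Phi])$ — e.g. as the weights of the induced grading on $\Lm$, or via the Hilbert series of the associated graded Jacobi algebra — hence any two weighted homogeneous models right-equivalent to $\Phi$ share the same multiset of weights; I would deduce this from Theorem~\ref{MY}, which already says the superpotential is determined by $(\Lm,[\Phi])$ up to right equivalence, combined with the observation that right equivalences between two weighted homogeneous superpotentials must match their gradings.

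\medskip

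The main obstacle I anticipate is the integration/normal-form step in the noncommutative setting: unlike the commutative case where one invokes the Poincaré–Dulac theorem for vector fields, here one must develop (or cite from \cite{HZ}) the cohomological vanishing for the complex $(\der(F),\,[\xi_1,-])$ truncated in each degree, and must control convergence in the $\fm$-adic topology on the complete free algebra — the potential resonances among the $r_i$ need to be ruled out using only the positivity and rationality forced by $\dim_k\Lm<\infty$, and verifying that no bad resonance of the relevant sign can occur is the delicate point. A secondary subtlety is passing cleanly between identities in $F_{\ol\cy}$ and genuine equalities of derivations acting on $F$, i.e. lifting the cyclic identity $[\Phi]=0$ to an honest Euler vector field; this requires the precise dictionary between cyclic derivatives, double derivations, and the $F_{\ol\cy}$-action, which I would set up carefully at the start.
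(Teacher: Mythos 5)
Your overall outline (produce $\xi\in\der_k^+(F)$ with $\Phi_{\#}(\xi)=\Phi$ from $[\Phi]=0$, normalize, read off weights) matches the paper's, but the central normalization step as you propose it has a genuine gap. You want to conjugate $\xi$ all the way to the linear Euler field $\sum_i r_ix_i\partial_{x_i}$ by a Poincar\'e--Dulac argument, claiming the obstructions vanish because the weights are positive rationals with ``no resonances of the relevant sign.'' Positivity does not exclude resonances: with all $r_j\in(0,1/2)$ one can perfectly well have $r_i=\sum_j m_jr_j$ with $\sum_j m_j\geq 2$ (e.g.\ $r_i=1/4$, $r_j=1/8$), so the resonant higher-order terms of $\xi$ cannot in general be removed, and $\xi$ is \emph{not} conjugate to its linear part. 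The paper circumvents this entirely: it proves a Jordan--Chevalley decomposition for derivations of $F$ (Theorem \ref{decom-derivation}), writing $\xi=\xi_S+\xi_N$ with $\xi_S$ semisimple (hence conjugate to a diagonal linear derivation), $\xi_N$ nilpotent on $\fm/\fm^2$ and $[\xi_S,\xi_N]=0$, and then shows separately (Lemma \ref{semisimple-preimage}, via the eigenvector expansion of the canonical representative and Lemma \ref{nilpotent-derivation} on nilpotent derivations) that $\Phi_{\#}(\xi_S)=\Phi$ still holds. One never linearizes $\xi$ itself; one only needs its semisimple part to act as the identity on $\Phi$. Your plan is missing the argument that discards $\xi_N$ without killing the relation $\Phi_{\#}(\xi)=\Phi$.

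Two further points are underestimated. First, the eigenvalues of $\xi_S$ on $\fm/\fm^2$ live in $k$ and are not a priori rational; the paper (Lemma \ref{weight-homo-eigenvector}) extracts rational weights by choosing a $\QQ$-basis of $\QQ a_1+\cdots+\QQ a_n+\QQ b$ and projecting, and the bound $0<r_i<1/2$ is not the one-line observation you give but a bootstrap ($P_\varepsilon\leq Q_{2\varepsilon+1/2}$, $Q_\varepsilon\leq P_{2\varepsilon}$) resting on a Krull-height argument (Lemma \ref{word-constitution}) that controls which variables must actually occur linearly in $\Phi$. Second, the uniqueness of the type does not follow formally from Theorem \ref{MY} plus ``right equivalences match gradings'': a right equivalence between two weighted homogeneous models need not respect either grading, and the paper has to compare characteristic polynomials of $\zeta$ and $\zeta_0$ on $\fm/\fm^2$ using a regular-sequence argument in $k[[x_1,\ldots,x_n]]$ together with Lemma \ref{semisimple-unique}.
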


One can make a formal analogue between the study of superpotentials with the study of hypersurface singularities. If we view the complete free algebra as the ring of formal functions on  noncommutative affine space, then Theorem \ref{MY} is a noncommutative version of Mather-Yau theorem (\cite{MY}) and Theorem \ref{Saito} is a noncommutative version of Saito's theorem (\cite{Sait}). In fact, the proofs of Theorem \ref{MY} and \ref{Saito} are to some extent inspired by the proofs of these two classical theorems, although certain conceptional gap needs to be filled in the  noncommutative case.  

Superpotential algebras have appeared in many mathematical areas including representation theory, topology and algebraic geometry. The finite dimensional condition should be understood as an analogue of isolated hypersurface singularity. Finite dimensional superpotential algebras can appear at least from two sourses. The first is the theory of (generalized) cluster category (\cite{Am}). The cluster category is defined from a Ginzburg dg-algebra of dimension 3 with finite dimensional zero-th homology. The zero-th homology of a Ginzburg dg-algebra is a superpotential algebra. It also appears in noncommutative deformation theory. Given a 3-Calabi Yau dg-category with appropriate assumptions, the noncommutative deformation functor of a rigid object in this category is represented by a finite dimensional superpotential algebra. For example, if $C\subset Y$ is a contractible rational curve in a smooth CY 3-fold $Y$ then the corresponding superpotential algebra is precisely the contraction algebra considered by Donovan and Wemyss \cite{DW13}.

It is well known that weighted homogeneous hypersurface singularities admit a lot of good properties. For instance, the monodromy of a weighted homogeneous hypersurface singularity is semi-simple. Weighted homogeneous superpotentials also have some nice properties. For example, the Ginzburg algebra of a weighted homogeneous superpotential carries an extra grading. The calculation of Hochschild cohomology can be greatly simplified using this grading. However, compared with the commutative case the understanding of the properties of weighted homogeneous superpotentials is still quite limited. Theorem \ref{Saito} is one attempt along this line. Note that the vanishing of the class $[\Phi]$ is fairly easy to check. At least at this point, commutative and noncommutative cases have marginal differences. Remember that whether an isolated hypersurface singularity is weighted homogeneous can be checked by comparing the Milnor number and  the Tjurina number. 

The paper is organized as follows. In Section \ref{sec:Pre}, we recall several basic facts on noncommutative calculus and superpotential algebras. These facts are well known to experts and have been reviewed in Section 2 of \cite{HZ}. We repeat it simply to make the paper as self-contained as possible. The readers who are familiar with these can skip Section \ref{sec:Pre}. In Section \ref{sec:Chev}, we prove a Jordan-Chevalley type theorem for decomposition of derivations on complete free algebras. This result is of independent interest. It enables us to link quasi-homogeneous superpotentials to weighted-homogeneous one. In Section \ref{sec:proof}, we present the proof of the main theorem.

\paragraph{Acknowledgments.}  The manuscript was completed during the visit of the first author to University of Washington on August 2018. He would like to thank James Zhang and University of Washington for hospitality.  The research of the first author was supported by RGC General Research Fund no. 17330316,  no. 17308017 and Early Career grant no. 27300214. The research of the second author was supported by NSFC grant no. 11601480 and RGC Early Career grant no. 27300214.

\section{Preliminaries}\label{sec:Pre}

In this section, we collect basic notations and terminologies that are of concern. Throughout, we fix a base commutative ring $k$ with unit. All algebras are $k$-algebras, and we denote $\ot=\otimes_k$ for the tensor product of $k$-modules unless
specified otherwise.

Fix an integer $n\geq1$. Let  $F$ be the complete free algebra $k\lgg x_1,\ldots,x_n\rgg$.  Elements  of $F$ are formal series $\sum_{w} a_w w$, where $w$ runs over all words in $x_1,\ldots,x_n$ and $a_w\in k$. Let $\fm \subseteq F$ be the ideal generated by $x_1,\ldots, x_n$.  For any subspace $U$ of $F$, let $U^{cl}$ be the closure of $U$ with respect to the $\fm$-adic topology on $F$. Note that $U^{cl}= \cap_{r\geq 0 } (U+\fm^r)$.

Recall that \emph{($k$-)derivation of $F$ in a $F$-bimodule  $M$} is defined to be a ($k$-)linear map $\delta:F\to M$  satisfies the Leibniz rule, that is $\delta(ab) = a\delta(b)+a\delta(b)$ for all $a,b\in F$.  We denote by $\der_k(F,M)$ the set of all $k$-derivations of $F$ in $M$, which carries a natural $k$-module structure.  
We write 
\[
\der_k ( F): =\der_k(F,F) 
\]
an call its elements    \emph{$k$-derivations of $F$}. Clearly, derivations  of  $F$  are  uniquely determined by their value at generators $x_j$.  Note that $\der_k(F)$ admits neither left nor right $F$-module structure.

Let $F\wh{\ot} F$ be the $k$-module whose elements are formal series of the form
$\sum\nolimits_{u,v} a_{u,v}~ u\ot v$, where $u,v$ runs over all words in $x_1,\ldots, x_n$ and $a_{u,v}\in k$. This is nothing but the adic completion of $F\ot F$ with respect to the ideal $\fm\ot F+F\ot \fm$. It contains $F\ot F$  as a subspace  under the identification
\[(\sum_{u} a'_u~u) \ot (\sum_{v} a''_v~v) \mapsto \sum_{u,v} a'_ua''_v~ u\ot v.\]
There are two obvious $F$-bimodule structures on $F \wh{\ot} F$, which we call the outer and the inner bimodule structures respectively,  extends those  on the subspace $F \ot F$ defined respectively by 
\[
a(b^\p\ot b^{\p\p})c:=ab^\p\ot b^{\p\p}c \quad \text{and} \quad a*(b^\p\ot b^{\p\p})*c:=b^\p c\ot ab^{\p\p}.
\]
Unless otherwise stated, we  view $F\wh{\ot} F$ as a  $F$-bimodule with respect to the outer bimodule structure.

We call derivations of $F$ in the $F$-bimodule $F\wh{\ot}F$ \emph{double derivations} of $F$. The inner bimodule structure on $F\wh{\ot}F$ naturally yields a bimodule structure on the space of double derivations
\[
\dder_k(F):= \der_k(F, F\wh{\ot}F).
\]
For any  $\delta \in \dder_k(F)$ and any $f\in F$, we also write $\delta(f)$ in Sweedler's notation as \begin{align}\label{Sweedler}
\delta(f)= \delta(f)^\p\ot\delta(f)^\pp.
\end{align}
One shall bear in mind  that this notation is an infinite sum. Clearly, double derivations of $F$ are uniquely determined by their values on generators $x_j$. Thus, we have double derivations
\[\frac{\partial~}{\partial x_i}: F \to F\wh{\ot} F, ~~~~ x_j\mapsto \delta_{i,j}~1\ot 1.\]
Moreover, every double derivation of $F$ has a unique representation of the form
\begin{align}\label{representation-doub}
\sum_{i=1}^n \sum_{u,v} a_{u,v}^{(i)}~ u* \frac{\partial~}{\partial x_i} * v,~~~~~a_{u,v}^{(i)}\in k,
\end{align}
where $u,v$ run over all words on $x_1,\ldots, x_n$, and $*$ denotes the scalar multiplication of the bimodule structure of $\dder_k(F)$. The infinite sum (\ref{representation-doub}) makes sense in the obvious way. 


There are two obvious  linear maps $\mu: F\wh{\ot} F\to F$ and $\tau: F\wh{\ot} F \to  F\wh{\ot} F$ given respectively by
\[
\mu(\sum_{u,v} a_{u,v} u\ot v) =  \sum_{w} (\sum_{w=uv} a_{u,v})~w \quad \text{and} \quad \tau (\sum_{u,v} a_{u,v} u\ot v) = \sum_{u,v} a_{v,u} u\ot v.
\]
Also, putting  on $\Hom_k(F,F)$  the $F$-bimodule structure  defined by
\[
a_1\cdot f \cdot a_2: b\mapsto a_1 f(b) a_2, ~~~~~ f\in \Hom_k(F,F), ~ a_1, a_2, b\in F.
\]
Though  the map $\dder_k(F) \xrightarrow{\mu\circ- } \Hom_k(F,F)$ doesn't preserves  bimodule structures,  the map
\[
\mu\circ \tau\circ- : \dder_k (F) \to \Hom_k(F,F)
\]
is clearly a homomorphism of $F$-bimodules. We write
$$
\cder_k(F):= \im(\mu \circ \tau \circ-)
$$
and call its elements \emph{cyclic derivations} of $F$. 
Note that by definition $\cder_k(F)$ is  an $F$-sub-bimodule of $\Hom_k(F,F)$, and hence is itself an $F$-bimodule. For each $1\leq i\leq n$, let
\[
D_{x_i}:= \mu \circ \tau \circ \frac{\partial~}{\partial x_i} \in \cder_k(F).
\]
These cyclic derivations was first studied by Rota, Sagan and Stein \cite{RRS}. By (\ref{representation-doub}), every cyclic derivation of $F$ has a decomposition (not necessary unique) of the form
\begin{align}\label{representation-cyc}
\sum_{i=1}^n \sum_{u,v} a_{u,v}^{(i)}~ u\cdot D_{x_i}\cdot v,~~~~~a_{u,v}^{(i)}\in k.
\end{align}
In the sequel, if there is no risk of confusion, we always simply write $D_i$ for $D_{x_i}$.

Elements of $F_{\ol{\cy}}:= F/[F,F]^{cl}$ are called \emph{superpotentials} of $F$.  Let $\pi: F\to F_{\ol{\cy}}$ be the canonical projection. Given a superpotential $\Phi\in F_{\ol{\cy}}$, there are two linear maps  
\begin{eqnarray*}
&~\, \Phi_{\#}:\der_k(F) \to F_{\ol{\cy}}, & \quad  \xi \mapsto \pi(\xi(\phi))\\
&  \Phi_*: \cder_k(F) \to F,& \quad  D\mapsto D(\phi),
\end{eqnarray*}
where $\phi$ is any representative of $\Phi$. Note that all derivations and cyclic derivations of $F$ are continuous with respect to the $\fm$-adic topology on $F$. Consequently,  $\xi([F,F]^{cl}) \subseteq [F,F]^{cl}$ for each  derivation $\xi\in \der_k(F)$, and $D([F,F]^{cl})=0$ for each cyclic derivation $D\in \cder_k(F)$. It follows immediately that the resulting maps $\Phi_{\#}$ and $\Phi_*$ are  independent of the choise of $\phi$.

\begin{lemma} \label{derivation+}
For any superpotential $\Phi\in F_{\ol{\cy}}$, there is a commutative diagram as following:
\begin{align*}
\xymatrix{
\dder_k (F) \ar@{->>}[r]^-{\mu\circ \tau\circ-}  \ar@{->>}[d]^-{\mu\circ-} &   \cder_k (F) \ar[r]^-{\Phi_*} & F\ar@{->>}[d]^-{\pi} \\
\der_k (F) \ar[rr]^-{\Phi_{\#}} & & F_{\ol{\cy}}.
}  
\end{align*}
Moreover, $\Phi_*$ is a homomorphism of $F$-bimodules  and hence $\im(\Phi_*)$ is a two-sided ideal of $F$.
\end{lemma}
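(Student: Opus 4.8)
The plan is to dispatch the three assertions in turn --- commutativity of the square, the $F$-bimodule property of $\Phi_*$, and the ideal property of $\im(\Phi_*)$ --- noting that the last follows from the second. I would begin with the bimodule claim, since it is essentially formal. The preceding discussion exhibits $\cder_k(F)$ as an $F$-sub-bimodule of $\Hom_k(F,F)$ with the pointwise action $(a_1\cdot f\cdot a_2)(b)=a_1 f(b)a_2$, and shows that $\Phi_*$ is well defined by $D\mapsto D(\phi)$ for any representative $\phi$ of $\Phi$. Hence $\Phi_*$ is nothing but the evaluation map $\mathrm{ev}_\phi$ restricted to $\cder_k(F)$, so $\Phi_*(a_1\cdot D\cdot a_2)=(a_1\cdot D\cdot a_2)(\phi)=a_1\,D(\phi)\,a_2=a_1\,\Phi_*(D)\,a_2$, which is exactly the statement that $\Phi_*$ is an $F$-bimodule homomorphism. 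Its image is therefore an $F$-sub-bimodule of $F$, i.e. a two-sided ideal (no topological closure is needed, since an ideal need only be stable under the two-sided action).

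For commutativity I would first record the three surjectivity claims depicted in the diagram: $\mu\circ\tau\circ-$ is onto $\cder_k(F)$ by the very definition of $\cder_k(F)$ as its image, $\pi$ is onto by definition, and $\mu\circ-\colon\dder_k(F)\to\der_k(F)$ is onto because the double derivation determined by $x_i\mapsto 1\ot\xi(x_i)$ is carried by $\mu\circ-$ to a derivation agreeing with $\xi$ on generators, hence to $\xi$. For the square itself, fix $\delta\in\dder_k(F)$ and a representative $\phi$, and write $\delta(\phi)=\delta(\phi)^\p\ot\delta(\phi)^\pp$ in Sweedler notation (an $\fm$-adically convergent series in $F\wh{\ot}F$). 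Running around the square one way gives $\pi\bigl(\Phi_*(\mu\circ\tau\circ\delta)\bigr)=\pi\bigl(\mu(\tau(\delta(\phi)))\bigr)=\pi\bigl(\delta(\phi)^\pp\,\delta(\phi)^\p\bigr)$, and the other way gives $\Phi_{\#}(\mu\circ\delta)=\pi\bigl((\mu\circ\delta)(\phi)\bigr)=\pi\bigl(\mu(\delta(\phi))\bigr)=\pi\bigl(\delta(\phi)^\p\,\delta(\phi)^\pp\bigr)$. Thus it suffices to prove the single identity $\pi\circ\mu=\pi\circ\mu\circ\tau$ of maps $F\wh{\ot}F\to F_{\ol{\cy}}$. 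On the dense subspace $F\ot F$ this is immediate, since $\mu(u\ot v)-\mu(\tau(u\ot v))=uv-vu\in[F,F]$ and $\pi$ annihilates $[F,F]$; and it propagates to all of $F\wh{\ot}F$ because $\mu$, $\tau$ and $\pi$ are continuous for the $\fm$-adic topologies, so both sides are continuous maps agreeing on a dense subspace. Equivalently, one may argue directly that $\delta(\phi)^\pp\delta(\phi)^\p-\delta(\phi)^\p\delta(\phi)^\pp$ is an $\fm$-adic limit of finite sums of commutators and hence lies in $[F,F]^{cl}=\ker\pi$.

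The only genuine obstacle is the topological bookkeeping around the completed tensor product $F\wh{\ot}F$: one must make sure that $\mu$, $\tau$, $\pi$ and evaluation at a fixed element are all continuous, so that identities verified on ``polynomial'' elements and on generators extend to the completions, and that the infinite Sweedler sum $\delta(\phi)=\delta(\phi)^\p\ot\delta(\phi)^\pp$ is only manipulated through such continuous operations. Once these points are granted, the lemma is a short diagram chase resting on two elementary facts: evaluation at a fixed element of $F$ is $F$-bilinear, and $ab\equiv ba\pmod{[F,F]}$.
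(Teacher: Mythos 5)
Your proposal is correct and follows essentially the same route as the paper: the square commutes because $\mu(\delta(\phi))-\mu(\tau(\delta(\phi)))\in[F,F]^{cl}$ (which you verify on $F\ot F$ via $uv-vu$ and extend by $\fm$-adic continuity, a point the paper leaves implicit), and the bimodule property is the same one-line evaluation computation $\Phi_*(a\cdot D\cdot b)=aD(\phi)b$. Your extra care with the surjectivity of $\mu\circ-$ and with the topological bookkeeping only fills in details the paper declares ``clear.''
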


\begin{proof}
Let $\phi\in F$ be an arbitrary representative of $\Phi$. Note that  $\mu(\delta(\phi)) - \mu(\tau (\delta(\phi))) \in [F,F]^{cl}$ for all double derivations $\delta\in \dder_k(F)$ and all formal series $\phi\in F$, the diagram commutes. The surjection of the maps $\pi$, $\mu\circ-$ and $\mu\circ \tau\circ-$ is clear.  Also, we have
\begin{eqnarray*}
\Phi_*(a\cdot D\cdot b) = (a\cdot D\cdot b)(\phi) = a D(\phi) b = a\Phi_*(D) b
\end{eqnarray*}
for all $a,b\in F$ and $D\in \cder_k(F)$, so $\Phi_*$ is a homomorphism of $F$-bimodules.
\end{proof}

Recall that two words $u$ and $v$ on $x_1,\ldots, x_n$ are {\em conjugate} if there are words $w_1,w_2$ such that $u=w_1w_2$ and $v=w_2w_1$. Equivalent classes under this equivalence relation are called \emph{necklaces} or \emph{conjugacy classes}. Also recall that a word $u$ is \emph{lexicographically smaller} than another word $v$ if there exist factorizations $u=wx_iw'$ and $v=wx_jw''$ with $i<j$. This order relation restricts to a total order on  each necklace. Let us call a word \emph{standard}  if it is maximal in its necklace. 

\begin{remark}
Every superpotential of $F$  has a unique representative, called the  \emph{canonical representative},  which is a  formal linear combination of  standard words. Given a superpotential $\Phi\in F_{\ol{\cy}}$, the smallest integer $r$ such that $\Phi\in \pi(\fm^r)$ is called the \emph{order of $\Phi$}.  Note that the order of a superpotential coincides with the order of its canonical representative.
\end{remark}

\begin{definition}\label{superpotential-complete}
Let $\Phi\in F_{\ol{\cy}}$ be a superpotential. The \emph{Jacobi algebra} or the \emph{superpotential algebra} associated to $\Phi$ is defined to be the associative algebra
\[
\Lm(F, \Phi) := F/\rJ(F,\Phi),
\]
where $\rJ(F,\Phi):= \im(\Phi_*)$ is called the \emph{Jacobi ideal} of $F$ associated to $\Phi$. Note that if $k$ is noetherian  then $\rJ(F,\Phi)= (\Phi_*(D_{x_1}),\ldots, \Phi_*(D_{x_n}))^{cl}$ by \cite[Lemma 2.6]{HZ}.
\end{definition}

We denote by $\mathcal{G}:=\Aut_k(F,\fm)$ the group of $k$-algebra automorphisms of $F$ that preserve $\fm$. It is a subgroup of  $\Aut_k(F)$, the group of all $k$-algebra automorphisms of $F$.  In the case when $k$ is a field,  $\mathcal{G}=\Aut_k(F)$. Note that $\mathcal{G}$ acts on $F$ and $F_{\ol{\cy}}$ in the obvious way.

\begin{definition}\label{right-equivalent}
For superpotentials $\Phi,\Psi\in F_{\ol{\cy}}$, we say $\Phi$ is \emph{(formally) right equivalent} to $\Psi$ and write $\Phi \sim \Psi$, if $\Phi$ and $\Psi$ lie in the same $\cG$-orbit.
\end{definition}

\begin{prop}[\text{\cite[Proposition 3.7]{DWZ}, \cite[Proposition 3.3]{HZ}}]\label{Jacobi-transform}
Let $\Phi \in F_{\ol{\cy}}$  and $H\in \mathcal{G}$.Then
\[
H(\rJ(F,\Phi)) =\rJ(F, H(\Psi)).
\]
Consequently, $H$ induces an isomorphism of algebras $\Lm(F, \Phi) \cong \Lm(F, H(\Psi))$.
\end{prop}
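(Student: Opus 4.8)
The plan is to prove the ideal identity by means of a noncommutative chain rule for cyclic derivatives, after which the algebra isomorphism is immediate. Fix a representative $\phi\in F$ of $\Phi$ and write $h_j:=H(x_j)\in\fm$. Since $H\in\cG$ is a homeomorphism of $F$ for the $\fm$-adic topology, it descends to an automorphism of $F_{\ol{\cy}}$ and carries the representative $\phi$ to a representative $H(\phi)$ of $H(\Phi)$. As $H^{-1}\in\cG$ as well, it is enough to prove the single inclusion $\rJ(F,H(\Phi))\subseteq H(\rJ(F,\Phi))$ for every $\Phi$ and every $H$: applying it to $H^{-1}$ and $H(\Phi)$ gives $\rJ(F,\Phi)\subseteq H^{-1}(\rJ(F,H(\Phi)))$, i.e. $H(\rJ(F,\Phi))\subseteq\rJ(F,H(\Phi))$, whence equality.

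The key step is the chain rule: for every $f\in F$ and every $1\le i\le n$,
\[
D_i\big(H(f)\big)=\sum_{j=1}^n \nu_{i,j}\big(H(D_j f)\big),
\]
where, writing $\frac{\partial h_j}{\partial x_i}=\sum r\ot s$ in Sweedler notation, $\nu_{i,j}$ is the continuous $k$-linear map $F\to F$, $g\mapsto\sum s\,g\,r$. I would first establish the analogous identity for the double derivative: the composite $\frac{\partial~}{\partial x_i}\circ H\colon F\to F\wh\ot F$ is a derivation when $F\wh\ot F$ is equipped with the bimodule structure pulled back along $H$ on both sides (i.e. $a\star m\star b:=H(a)\,m\,H(b)$ in the outer structure), hence is determined by its values $\frac{\partial h_j}{\partial x_i}$ on the generators $x_j$; one checks via the Leibniz rule that the assignment $f\mapsto\sum_j (H\ot H)\big(\frac{\partial f}{\partial x_j}\big)$ with $\frac{\partial h_j}{\partial x_i}$ inserted into the middle tensor factor is another derivation into the same bimodule agreeing with $\frac{\partial~}{\partial x_i}\circ H$ on generators, hence equal. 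Applying $\mu\circ\tau$ and using that $\mu\circ\tau$ sends $\sum p\ot q$ to $\sum qp$, together with $\sum_{(p,q)}H(qp)=H(D_j f)$, collapses the double-derivative identity to the displayed cyclic one.

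Granting the chain rule, each summand $\sum s\,H(D_j f)\,r=\sum H\big(H^{-1}(s)\,(D_j f)\,H^{-1}(r)\big)$ is a convergent sum of elements of the two-sided ideal $H(\rJ(F,\Phi))$, which is closed because $\rJ(F,\Phi)=(\Phi_*(D_{x_1}),\dots,\Phi_*(D_{x_n}))^{cl}$ is closed (by \cite[Lemma 2.6]{HZ}, valid since $k$ is noetherian in the cases of interest) and $H$ is a homeomorphism; hence $D_i(H(\phi))\in H(\rJ(F,\Phi))$ for all $i$. Since $\rJ(F,H(\Phi))=\big(D_1(H(\phi)),\dots,D_n(H(\phi))\big)^{cl}$ and $H(\rJ(F,\Phi))$ is a closed two-sided ideal containing each generator, we obtain $\rJ(F,H(\Phi))\subseteq H(\rJ(F,\Phi))$, and then equality by the first paragraph. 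Consequently $H$ maps $\rJ(F,\Phi)$ bijectively onto $\rJ(F,H(\Phi))$ and therefore induces an isomorphism of algebras $\Lm(F,\Phi)=F/\rJ(F,\Phi)\xrightarrow{\ \sim\ }F/\rJ(F,H(\Phi))=\Lm(F,H(\Phi))$.

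The main obstacle is entirely a matter of careful bookkeeping in the chain rule: identifying the correct pulled-back $F$-bimodule structure on $F\wh\ot F$, making precise the ``insert into the middle factor'' operation so that it is a well-defined convergent element of $F\wh\ot F$ and commutes with the relevant bimodule actions, and then verifying the Leibniz rule on both sides. Once the operator identity $D_i\circ H=\sum_j\nu_{i,j}\circ H\circ D_j$ is in place, the rest of the argument is formal, and the topological points (convergence of the sums, closedness of $\rJ$) cause no difficulty since in all applications $k$ is a field.
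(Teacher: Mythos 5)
Your proof is correct, and it is essentially the intended one: the paper gives no argument of its own for this proposition but cites \cite[Proposition 3.7]{DWZ} and \cite[Proposition 3.3]{HZ}, whose proofs rest on exactly the cyclic chain rule $D_i\circ H=\sum_j\nu_{i,j}\circ H\circ D_j$ that you establish via the double-derivation Leibniz computation. Your reduction to a single inclusion by symmetry in $H$ and $H^{-1}$, and the closedness/convergence bookkeeping (using the description $\rJ(F,\Phi)=(\Phi_*(D_{x_1}),\dots,\Phi_*(D_{x_n}))^{cl}$, valid since $k$ is a field in all cases of interest), are all sound.
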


Given a superpotential $\Phi\in F_{\ol{\cy}}$,  let $\fm_\Phi: =\fm/J(F,\Phi)$, which is an ideal of $\Lm(F,\Phi)$.  By Lemma \cite[Lemma 2.8]{HZ}, the $\fm_\Phi$-adic topology of $\Lm(F,\Phi)$ is complete.  Let
\[
\Lm(F, \Phi)_{\ol{\cy}}:=\Lm(F, \Phi)/[\Lm(F, \Phi), \Lm(F, \Phi)]^{cl}.
\]
Note that if $\Lm(F,\Phi)$ is finitely generated as a $k$-module then 
\[
\Lm(F, \Phi)_{\ol{\cy}} = \Lm(F, \Phi)/[\Lm(F, \Phi), \Lm(F, \Phi)] = HH_0(\Lm(F,\Phi)).
\]
The projection map $F\to \Lm(F, \Phi)$ induces  a natural map 
\[p_\Phi: F_{\ol{\cy}} \to \Lm(F, \Phi)_{\ol{\cy}} \]
with kernel $\pi(\rJ(F,\Phi))$. For any $\Theta\in F_{\ol{\cy}}$, we  write 
$[\Theta]$ for the class $p_\Phi(\Theta)$ in $\Lm(F, \Phi)_{\ol{\cy}}$.

\begin{definition}
A superpotential $\Phi\in F_{\ol{\cy}}$ is said to be \emph{quasi-homogeneous} if the class $[\Phi]$ is zero in $\Lm(F,\Phi)_{\ol{\cy}}$, or equivalently  $\Phi $ is contained in $\pi(\rJ(F,\Phi))$.
\end{definition}

The following result on quasi-homogeneous superpotentials is of interest. It is an immediate consequence of Theorem \ref{MY}.

\begin{theorem}[\text{\cite[Corollary 3.9]{HZ}}]
Let  $k$ be the complex number field. Let $\Phi, \Psi\in F_{\ol{\cy}}$ be two quasi-homogeneous superpotentials of order $\geq 3$ such that the Jacobi algebras $\Lm(F,\Phi)$ and $\Lm(F,\Psi)$ are both finite dimensional.  Then $\Phi$ is right equivalent to $\Psi$ if and only if $\Lm(F,\Phi)\cong \Lm(F,\Psi)$ as algebras.
\end{theorem}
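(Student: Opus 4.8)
The plan is to deduce the statement directly from Theorem \ref{MY}, the only new ingredient being that quasi-homogeneity trivializes the compatibility condition $\gamma_*([\Phi]) = [\Psi]$ occurring there.

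First I would dispose of the forward implication. If $\Phi \sim \Psi$, then $\Psi = H(\Phi)$ for some $H \in \cG$, and Proposition \ref{Jacobi-transform} produces an induced algebra isomorphism $\Lm(F,\Phi) \cong \Lm(F, H(\Phi)) = \Lm(F,\Psi)$. This step needs neither the finite-dimensionality hypothesis nor quasi-homogeneity.

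For the converse, suppose given an algebra isomorphism $\gamma : \Lm(F,\Phi) \xrightarrow{\ \sim\ } \Lm(F,\Psi)$. Since $\gamma$ carries commutators to commutators and is continuous for the $\fm$-adic topologies, it descends to an isomorphism $\gamma_* : \Lm(F,\Phi)_{\ol{\cy}} \xrightarrow{\ \sim\ } \Lm(F,\Psi)_{\ol{\cy}}$ (in the finite-dimensional setting this is just $\mathrm{HH}_0(\gamma)$). Now quasi-homogeneity of $\Phi$ says $[\Phi] = 0$ in $\Lm(F,\Phi)_{\ol{\cy}}$, so $\gamma_*([\Phi]) = 0$; and quasi-homogeneity of $\Psi$ says $[\Psi] = 0$ in $\Lm(F,\Psi)_{\ol{\cy}}$. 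Hence $\gamma_*([\Phi]) = [\Psi]$, which is precisely statement $(1)$ of Theorem \ref{MY}. Since $k = \CC$, the superpotentials $\Phi$ and $\Psi$ have order $\geq 3$, and both Jacobi algebras are finite dimensional, Theorem \ref{MY} applies and yields that $\Phi$ and $\Psi$ are right equivalent, as required.

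There is no real obstacle here: everything reduces to the observation that, for quasi-homogeneous superpotentials, an arbitrary isomorphism of Jacobi algebras automatically satisfies the restrictive compatibility requirement in the noncommutative Mather--Yau theorem, so the full classification can be invoked unconditionally. The only point demanding a moment's care is checking that $\gamma$ genuinely induces a map on the cyclic quotients (equivalently, on degree-zero Hochschild homology), which is immediate from the fact that algebra homomorphisms respect commutators and are continuous.
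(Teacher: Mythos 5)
Your proof is correct and is exactly the argument the paper intends: it states this result as an immediate consequence of Theorem \ref{MY}, and your observation that quasi-homogeneity forces $\gamma_*([\Phi]) = 0 = [\Psi]$ for any algebra isomorphism $\gamma$ is precisely the point that makes the deduction immediate. No issues.
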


\begin{definition}\label{def:wh}
Let $(r_1,\ldots, r_n)$ be a tuple of rational numbers with $0<r_1,\ldots, r_n\leq 1/2$. A superpotential $\Phi\in F_{\ol{\cy}}$ is said to be \emph{weighted-homogeneous of type $(r_1,\ldots,r_n)$} if it has a representative which is a  linear combination of monomials $x_{i_1}x_{i_2}\cdots x_{i_p}$ such that $r_{i_1}+r_{i_2}+\ldots r_{i_p}=1$.
\end{definition}

\begin{lemma}\label{weighted-imply-quasi}
Let $\Phi\in F_{\ol{\cy}}$ be a superpotential that is right equivalent to a weighted-homogeneous superpotential of a certain type. Then $\Phi$ is quasi-homogeneous.
\end{lemma}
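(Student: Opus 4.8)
The plan is to reduce to the weighted-homogeneous case and then exploit the Euler-type identity. Since right equivalence (i.e.\ the $\cG$-action) induces an algebra isomorphism $\Lm(F,\Phi)\cong\Lm(F,\Psi)$ carrying $[\Phi]$ to $[\Psi]$ by Proposition~\ref{Jacobi-transform}, the property $[\Phi]=0$ is invariant under right equivalence. Hence it suffices to show that a weighted-homogeneous superpotential $\Psi$ of type $(r_1,\ldots,r_n)$ is quasi-homogeneous, i.e.\ $\Psi\in\pi(\rJ(F,\Psi))$.

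The key computation is an Euler identity for cyclic derivatives. For a monomial $w=x_{i_1}\cdots x_{i_p}$, a direct check from the definition of $D_{x_j}$ (as $\mu\circ\tau\circ\partial/\partial x_j$) gives
\begin{align*}
\sum_{j=1}^n r_j\, x_j\, D_{x_j}(w) \equiv \Big(\sum_{\ell=1}^p r_{i_\ell}\Big)\, w \pmod{[F,F]^{cl}},
\end{align*}
because $x_j D_{x_j}(w)$, summed over all occurrences of $x_j$ in $w$, produces for each occurrence the cyclic rotation of $w$ bringing that letter to the front, which is congruent to $w$ modulo cyclic permutations. Summing over all letters and weighting by $r_j$ picks out the coefficient $\sum_\ell r_{i_\ell}$. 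If $\Psi$ has a representative $\psi$ that is a linear combination of monomials $w$ each satisfying $\sum_\ell r_{i_\ell}=1$, then applying the identity to $\psi$ termwise yields
\begin{align*}
\Psi = \pi(\psi) = \pi\Big(\sum_{j=1}^n r_j\, x_j\, \Psi_*(D_{x_j})\Big),
\end{align*}
and since each $\Psi_*(D_{x_j})=D_{x_j}(\psi)$ lies in $\rJ(F,\Psi)=\im(\Psi_*)$, which is a two-sided ideal of $F$ by Lemma~\ref{derivation+}, the element $\sum_j r_j x_j \Psi_*(D_{x_j})$ lies in $\rJ(F,\Psi)$. Therefore $\Psi\in\pi(\rJ(F,\Psi))$, i.e.\ $[\Psi]=0$.

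The only subtlety I expect is bookkeeping with the $\fm$-adic topology: $\psi$ is in general an infinite series, the sums above are infinite, and one must check the manipulations (summing the Euler identity termwise, identifying the limit with $\sum_j r_j x_j D_{x_j}(\psi)$) are valid in the completed setting. This is handled by noting that cyclic derivations are continuous for the $\fm$-adic topology and $\rJ(F,\Psi)=\im(\Psi_*)$ is closed, so the relation $\Psi\in\pi(\rJ(F,\Psi))$ passes to the limit; no genuinely hard point arises, only care. Finally, combining this with the invariance noted in the first paragraph gives the statement for any $\Phi$ right equivalent to a weighted-homogeneous superpotential.
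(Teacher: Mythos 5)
Your proposal is correct and follows essentially the same route as the paper: reduce to the weighted-homogeneous case via Proposition~\ref{Jacobi-transform}, then verify the Euler-type identity $\Phi=\pi\bigl(\sum_{i=1}^n r_i x_i\cdot \Phi_*(D_{x_i})\bigr)$, which the paper states as ``not hard to see'' and you spell out correctly via the cyclic-rotation computation for $x_jD_{x_j}(w)$.
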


\begin{proof}
By Proposition \ref{Jacobi-transform}, quasi-homogeneous superpotentials are closed under the action of $\mathcal{G}$.  So we may assume $\Phi$ is itself weighted-homogeneous of type $(r_1,\ldots, r_n)$ for some rational numbers $0< r_1,\ldots, r_n\leq 1/2$.  It is not hard to see that $\Phi=\pi\Big(\sum_{i=1}^nr_ix_i \cdot \Phi_*(D_{x_i})\Big)$. The result follows.
\end{proof}

The aim of this paper is to study  the converse of Lemma \ref{weighted-imply-quasi}.  To this end, we employ the following geometric view of point. Let $\der_k^+(F)$ be the space of derivations of $F$ that send $\mathfrak{m}$ to $\fm$. Intuitively,  $\der_k^+(F)$ can be seen as the ``tangent space'' of the ``infinite dimensional Lie group'' $\mathcal{G}$ at the identity map $\id$. For every superpotential $\Phi\in F_{\ol{\cy}}$, the action of $\mathcal{G}$ on $F_{\ol{\cy}}$ yields a ``smooth'' map
\[
\lambda_\Phi: \mathcal{G} \to F_{\ol{\cy}}, \quad H\mapsto H(\Phi).
\]
The  map $\Phi_{\#}= \Phi_{\#}|_{\der_k^+(F)}: \der_k^+(F)\to F_{\ol{\cy}}$ can be seen as the ``differential'' of  $\lambda_\Phi$ at $\id$.

It is clear that a superpotential $\Phi\in F_{\ol{\cy}}$ is weighted-homogeneous of type $(r_1,\ldots, r_n)$ if and only if $\Phi_{\#}(\xi) =\Phi$, where $\xi\in \der_k^+(F)$ is the derivation given by $\xi(x_i)=r_ix_i$.  We have the following characterization of quasi-homogeneous superpotentials in this perspective.

\begin{lemma}\label{quasi-derivation}
Suppose that $k$ is a field. Let $\Phi\in F_{\ol{\cy}}$ be a superpotential of order $\geq 2$ such that the Jacobi algebra associated to $\Phi$ is finite dimensional. Then $\Phi$ is quasi-homogeneous if and only if $\Phi_{\#}(\xi)=\Phi$ for some derivation $\xi\in \der_k^+(F)$.
\end{lemma}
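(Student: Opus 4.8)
The plan is to establish the two directions separately, exploiting the commutative diagram of Lemma \ref{derivation+} together with the surjectivity of the maps appearing there. For the ``if'' direction, suppose $\Phi_{\#}(\xi)=\Phi$ for some $\xi\in\der_k^+(F)$. Since $\mu\circ-:\dder_k(F)\to\der_k(F)$ is surjective (and restricts to a surjection onto $\der_k^+(F)$, a point I would need to check by lifting $\xi(x_i)\in\fm$ to an appropriate element of $\fm\wh\ot F+F\wh\ot\fm$), we may choose a double derivation $\delta$ with $\mu\circ\delta=\xi$. Then $\xi=\sum_i a_i'\,D_i\,a_i''$-type expressions show, via the diagram, that $\Phi=\Phi_{\#}(\xi)=\pi\bigl(\Phi_*(\mu\circ\tau\circ\delta)\bigr)$, and $\Phi_*(\mu\circ\tau\circ\delta)\in\im(\Phi_*)=\rJ(F,\Phi)$. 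Hence $\Phi\in\pi(\rJ(F,\Phi))$, i.e. $[\Phi]=0$, so $\Phi$ is quasi-homogeneous. This direction does not use finite-dimensionality of the Jacobi algebra.

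For the ``only if'' direction, suppose $[\Phi]=0$, i.e. $\Phi\in\pi(\rJ(F,\Phi))$. Picking a canonical representative $\phi$ of $\Phi$, this means $\phi-\sum_i b_i\,\Phi_*(D_i)\,c_i \in [F,F]^{cl}$ for suitable finite families $b_i,c_i\in F$ (here I would invoke that $\rJ(F,\Phi)=(\Phi_*(D_{x_1}),\dots,\Phi_*(D_{x_n}))^{cl}$ from Definition \ref{superpotential-complete}, valid since $k$ is a field hence noetherian; one must be careful that the ``closure'' allows the representation to be an infinite sum, but since $\Lm(F,\Phi)$ is finite dimensional, $\rJ(F,\Phi)$ is already closed and in fact $\fm^N\subseteq\rJ(F,\Phi)$ for some $N$, so one can arrange a genuine finite expression modulo a controlled error). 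The cyclic derivation $D:=\sum_i c_i\cdot D_i\cdot b_i\in\cder_k(F)$ then satisfies $\Phi_*(D)=\sum_i c_i\,\Phi_*(D_i)\,b_i$; lifting $D$ along $\mu\circ\tau\circ-$ to a double derivation and composing with $\mu\circ-$ produces a derivation $\eta\in\der_k(F)$ with $\Phi_{\#}(\eta)=\pi(\Phi_*(D))=[\,\phi\bmod[F,F]^{cl}\,]=\Phi$ by the commutative diagram. The remaining issue is that $\eta$ need not send $\fm$ to $\fm$, i.e. $\eta$ may not lie in $\der_k^+(F)$; since $\Phi$ has order $\geq 2$, I expect to fix this by discarding the ``constant part'' of the $b_i,c_i$ and using that the order-$\geq 2$ hypothesis forces the degree-zero contribution of $\eta$ to annihilate $\Phi$ anyway, or by absorbing it — this is where finite-dimensionality, which guarantees $\fm^N\subseteq\rJ(F,\Phi)$ and hence good control on tails, is used.

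The main obstacle I anticipate is precisely this last point: converting the \emph{existence of a cyclic derivation $D$ with $\pi(\Phi_*(D))=\Phi$} (which is essentially a restatement of $[\Phi]=0$) into the existence of an \emph{honest derivation $\xi\in\der_k^+(F)$} with $\Phi_{\#}(\xi)=\Phi$. The subtlety is twofold: first, the lift of a cyclic derivation to a double derivation and then to an ordinary derivation is not canonical, and one must ensure convergence of the infinite sums involved (handled by the $\fm$-adic completeness of $F$ and the fact that $\Phi_*(D_i)\in\fm$ when $\Phi$ has order $\geq 2$); second, one must land in $\der_k^+(F)$ rather than merely $\der_k(F)$, which requires showing the generators $x_j$ are sent into $\fm$ — this should follow from choosing the $b_i,c_i$ with zero constant term, which is legitimate because the order-$\geq 2$ assumption means any constant terms contribute trivially to $\pi(\Phi_*(D))$. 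I would organize the argument so that finite-dimensionality of $\Lm(F,\Phi)$ enters exactly once, to replace ``closure'' statements by finite ones and thereby make the lifting elementary.
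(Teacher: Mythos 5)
Your overall strategy coincides with the paper's: both directions go through the commutative diagram of Lemma \ref{derivation+}, and your backward implication is correct (and, as you note, does not use finite-dimensionality). The forward implication, however, has a genuine gap at exactly the point you flag, and the patch you propose does not work. Having written $\Phi=\pi\bigl(\sum_i g_i\cdot\Phi_*(D_{x_i})\bigr)$ with $g_i\in F$ (which, after cycling the two-sided coefficients to one side modulo $[F,F]^{cl}$, is all that $[\Phi]=0$ gives you), the whole content of the lemma is to replace the $g_i$ by elements of $\fm$, so that the lifted derivation sends $\fm$ into $\fm$. Your justification --- that ``the order-$\geq 2$ assumption means any constant terms contribute trivially to $\pi(\Phi_*(D))$'' --- is false: for $\Phi=\pi(x^3)$ one has $\pi\bigl(c\,\Phi_*(D_{x})\bigr)=\pi(3cx^2)\neq 0$ whenever $c\neq 0$. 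The constant parts $c_i$ of the $g_i$ contribute $\pi\bigl(\sum_i c_i\Phi_*(D_{x_i})\bigr)=\Phi_{\#}(\partial_c)$ for the ``translation'' derivation $\partial_c\colon x_j\mapsto c_j$, which lies in $\der_k(F)$ but not in $\der_k^+(F)$; no simple degree count kills it, since only its lowest-degree piece is forced to vanish by comparing orders with $\Phi$, while its higher-degree pieces mix with the contributions of $h_i=g_i-c_i$.

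The paper resolves precisely this point by invoking \cite[Proposition 3.14(1)]{HZ}, which asserts that for a quasi-homogeneous $\Phi$ satisfying the hypotheses one can already write $\Phi=\pi\bigl(\sum_{i=1}^n g_i\cdot\Phi_*(D_{x_i})\bigr)$ with $g_1,\ldots,g_n\in\fm$; from there the argument is exactly your lifting, carried out through the restricted diagram $\dder_k^+(F)\to\cder_k^+(F)\to\der_k^+(F)$. So you must either cite that result or supply an independent proof of the reduction to $g_i\in\fm$ --- this is where the finite-dimensionality and order hypotheses genuinely enter. Your remaining worries (replacing closures by finite sums, convergence of the lifts) are non-issues: one-sided infinite combinations $\sum_i g_i\cdot D_{x_i}$ are already legitimate cyclic derivations by the representation (\ref{representation-cyc}), and the corresponding derivation $x_i\mapsto$ (cyclic rearrangement of $g_i\cdot D_{x_i}$ applied to a representative) is automatically continuous.
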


\begin{proof}
The backward implication is clear by the commutative diagram in Lemma \ref{derivation+}.  Next we show the forward implication. Assume that $\Phi$ is quasi-homogeneous. By \cite[Proposition 3.14 (1)]{HZ},
\[
 \Phi = \pi \Big(\sum_{i=1}^n g_i \cdot \Phi_*(D_{x_i})\Big) = \Big(\pi\circ \Phi_*\Big)(\sum_{i=1}^ng_i\cdot D_{x_i})\]
for some formal series $g_1,\ldots, g_n\in \fm$. Let $\dder_k^+(F)$  be the space of double derivations that map $\fm$ to $\fm\wh{\ot} F +F\wh{\ot} \fm$, and let $\cder_k^+(F)$ be the space of cyclic derivations that map  $\fm$ to $\fm$. Then the commutative diagram in Lemma \ref{derivation+} restricts to a commutative diagram 
\begin{align*}
\xymatrix{
\dder_k^+ (F) \ar@{->>}[r]^-{\mu\circ \tau\circ-}  \ar@{->>}[d]^-{\mu\circ-} &   \cder_k^+ (F) \ar[r]^-{\Phi_*} & F \ar@{->>}[d]^-{\pi} \\
\der_k^+ (F) \ar[rr]^-{\Phi_{\#}} & & F_{\ol{\cy}}. 
}   
\end{align*}
Since $\sum_{i=1}^ng_i\cdot D_{x_i} \in \cder_k^+(F)$,  the above commutative diagram shows that
$$\Phi_{\#}(\xi) = \Big(\pi\circ \Phi_*\Big)(\sum_{i=1}^ng_i\cdot D_{x_i}) = \Phi$$
for some derivation $\xi\in \der_k^+(F)$. This completes the proof.
\end{proof}

\section{Jordan-Chevalley decomposition of derivations}\label{sec:Chev}

Throughout, let $F$ be a fixed complete free algebra $k\lgg x_1,\ldots, x_n\rgg$ over a field $k$, and let $\mathfrak{m}$ be the ideal generated by $x_1,\ldots,x_n$. We assume that $k$ is  algebraically closed. This section devotes to establish a Jordan-Chevalley type decomposition for derivations of $F$ that send $\fm$ to $\fm$. 

The space of all derivations of $F$ that send $\fm$ to $\fm$ is denoted by $\der_k^+(F)$. There is a natural group action of $\mathcal{G}:=\Aut_k(F,\fm)=\Aut_k(F)$ on $\der_k^+(F)$ given by $$\Ad_H\xi:= H\circ \xi \circ H^{-1}.$$ This action respects the Lie bracket on $\der_k^+(F)$. In addition, one has $\xi(f) = b f$ if and only if $(\Ad_H\xi)(H(f)) =b~ H(f)$
for any  $\xi\in \der_k^+(F)$, any $H\in \mathcal{G}$, any $f\in F$ and any $b\in k$.

\begin{definition}
We say that a derivation $\xi\in \der_k^+(F)$
\begin{enumerate}
\item[(1)]  is \emph{nilpotent} if it  induces a nilpotent  endomorphism on $\mathfrak{m}/\mathfrak{m}^2$;
\item[(2)] is \emph{semisimple} if it has $n$ eigenvectors in $\mathfrak{m}$ which form a basis in $\mathfrak{m}/\mathfrak{m}^2$, or equivalently there is an automorphism $H\in\Aut_k (F)$ such that $\Ad_H\xi$ has eigenvectors $x_1,\ldots,x_n$.
\end{enumerate}
\end{definition}

\begin{prop}\label{eigenvector-decom}
Let $\xi\in \der_k^+(F)$ be a semisimple derivation. 
\begin{enumerate}
\item[(1)] A scalar $a\in k$ is an  eigenvalues of $\xi$ if and only if  $a\in \mathbb{N}a_1+\cdots +\mathbb{N}a_n$, where $a_1,\ldots,a_n\in k$ are the eigenvalues of the induced map of $\xi$ on $\mathfrak{m}/\mathfrak{m}^2$.
\item[(2)] Every formal series $f\in F$  can be uniquely decomposed into a formal sum $f=\sum_{a} f_a$, where $a$ runs over eigenvalues of $\xi$ and $f_a$ is an eigenvector of $\xi$ with eigenvalue $a$.
 \end{enumerate}
\end{prop}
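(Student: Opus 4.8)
The plan is to reduce to the case where $\xi$ already has $x_1,\ldots,x_n$ as eigenvectors, and then argue monomial-by-monomial. By the definition of semisimplicity, there is $H\in\Aut_k(F)$ so that $\Ad_H\xi$ has eigenvectors $x_1,\ldots,x_n$, say with eigenvalues $a_1,\ldots,a_n$; these $a_i$ are exactly the eigenvalues of the induced map on $\fm/\fm^2$ since $H$ acts as an invertible linear map on $\fm/\fm^2$ intertwining the two induced endomorphisms. Because the properties in both statements — the set of eigenvalues of $\xi$, and the existence/uniqueness of the eigen-decomposition of an arbitrary $f\in F$ — are preserved under replacing $\xi$ by $\Ad_H\xi$ and $f$ by $H(f)$ (using the displayed equivalence $\xi(f)=bf \iff (\Ad_H\xi)(H(f))=b\,H(f)$ and the fact that $H$ is a continuous $k$-linear automorphism of $F$), we may and do assume $\xi(x_i)=a_ix_i$ for all $i$.

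In this normalized situation, the Leibniz rule gives immediately that any word $w=x_{i_1}x_{i_2}\cdots x_{i_p}$ is an eigenvector of $\xi$ with eigenvalue $a_{i_1}+\cdots+a_{i_p}\in\NN a_1+\cdots+\NN a_n$. First I would prove (1): the displayed formula shows every such word-eigenvalue lies in $\NN a_1+\cdots+\NN a_n$, and conversely any $a=m_1a_1+\cdots+m_na_n$ with $m_i\in\NN$ is realized by the word $x_1^{m_1}\cdots x_n^{m_n}$, so it is an eigenvalue (the eigenvector being that word). The only subtlety is that a priori $\xi$ could have eigenvalues not coming from finite $k$-linear combinations of words; but once part (2) is established, every eigenvector is a (possibly infinite) sum of word-eigenvectors, hence any eigenvalue is already the eigenvalue of some word, which pins down the eigenvalue set exactly. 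So I would actually prove (2) first, or at least the structural half of it, and then deduce (1).

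For (2): group the words by their $\xi$-eigenvalue. For a scalar $a\in k$ let $W_a$ be the set of words $w$ with $\xi(w)=aw$, and for $f=\sum_w c_w w\in F$ set $f_a:=\sum_{w\in W_a}c_w w$. This sum lies in $F$ (it is a subseries of $f$; completeness of $F$ and the fact that $\fm$-adic convergence of $\sum c_w w$ forces convergence of any subseries gives $f_a\in F$), it is an eigenvector of $\xi$ with eigenvalue $a$ by term-by-term application of $\xi$ (here one uses that $\xi$ is continuous, so it commutes with the infinite sum — this is asserted in the excerpt for all derivations), and clearly $f=\sum_a f_a$ with only countably many nonzero $f_a$. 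For uniqueness, suppose $f=\sum_a g_a$ with $g_a$ an eigenvector of eigenvalue $a$; expanding each $g_a=\sum_w c^{(a)}_w w$ in the standard basis of words, applying $\xi$ and matching coefficients forces $c^{(a)}_w=0$ unless $w\in W_a$, so $g_a=f_a$ by comparing the word-expansion of $f$.

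The main obstacle is the bookkeeping around infinite sums and topology: one must check that regrouping the defining series of $f$ by eigenvalue produces genuine elements of the completion $F$, that $\xi$ can be applied term-by-term (continuity), and that two distinct words can have the same eigenvalue, so $W_a$ may be infinite and the "coefficient-matching" uniqueness argument must be carried out in the topological basis of words rather than naively. None of these is deep, but they are exactly the places where the noncommutative-complete setting differs from the finite-dimensional linear algebra intuition, so I would state the needed continuity and completeness facts explicitly before doing the coefficient comparison.
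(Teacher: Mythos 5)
Your proposal is correct and follows essentially the same route as the paper: reduce by conjugation to the case where $x_1,\ldots,x_n$ are eigenvectors, observe that every word is then an eigenvector with eigenvalue the corresponding sum of the $a_i$, and conclude. The paper leaves the regrouping-of-series and uniqueness bookkeeping implicit (``The result follows''), whereas you spell it out; the extra care is harmless and the substance is identical.
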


\begin{proof}
We may assume $\xi$ has $x_1,\ldots, x_n$ as eigenvectors with eigenvalue $a_1,\ldots, a_n$ respectively. Then every word $w=x_{i_1}\cdots x_{i_p}$ is an eigenvector of $\xi$ with eigenvalue $a_{i_1}+\cdots+a_{i_n}$. The result follows
\end{proof}

\begin{prop}\label{diagonalizable-simultaneous}
Let $\zeta_1,\ldots,\zeta_m\in \der_k^+(F)$ be semisimple derivations that commute with each other, that is $[\zeta_i,\zeta_j]=0$ for all $i,j=1,\ldots,n$. Then there exists an automorphism $H\in \Aut_k(F)$ such that $\Ad_H\zeta_1,\ldots, \Ad_H\zeta_m$ all have $x_1,\ldots, x_n$ as eigenvectors.
\end{prop}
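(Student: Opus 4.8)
The plan is to mimic the classical proof that a commuting family of diagonalizable operators is simultaneously diagonalizable, but carried out one step at a time in the pro-nilpotent setting of $\der_k^+(F)$. First I would reduce to the action on $\fm/\fm^2$: each $\zeta_i$ induces a linear endomorphism $\bar\zeta_i$ of the finite-dimensional vector space $V:=\fm/\fm^2$, and semisimplicity of $\zeta_i$ means precisely that $\bar\zeta_i$ is diagonalizable; the commutation relations $[\zeta_i,\zeta_j]=0$ descend to $[\bar\zeta_i,\bar\zeta_j]=0$. By the classical theorem, there is a linear change of coordinates on $V$ simultaneously diagonalizing all the $\bar\zeta_i$; lifting this linear change of coordinates to an automorphism $H_0\in\Aut_k(F)$ (sending $x_j$ to the corresponding linear combination of the $x_k$), and replacing each $\zeta_i$ by $\Ad_{H_0}\zeta_i$, I may assume from the start that every $\bar\zeta_i$ is diagonal in the basis $\bar x_1,\ldots,\bar x_n$ of $V$, with $\bar\zeta_i(\bar x_j)=a_{ij}\bar x_j$ for scalars $a_{ij}\in k$.

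The core step is then to produce a single automorphism $H\in\mathcal{G}$, tangent to the identity on $\fm/\fm^2$, such that $\Ad_H\zeta_i$ has $x_1,\ldots,x_n$ as genuine eigenvectors for all $i$ simultaneously. I would build $H$ (equivalently, the new coordinates $y_j=H^{-1}(x_j)$) order by order in the $\fm$-adic filtration: having arranged that $\zeta_i(x_j)\equiv a_{ij}x_j \pmod{\fm^{r}}$ for all $i,j$, I look for a correction $H_r$ with $H_r(x_j)=x_j+h_j$, $h_j\in\fm^r$, that improves the congruence to modulo $\fm^{r+1}$. Writing the error terms $\zeta_i(x_j)-a_{ij}x_j$ in degree $r$ and decomposing $\fm^r/\fm^{r+1}$ into joint eigenspaces for the (now diagonal, hence commuting and semisimple) induced action — here Proposition \ref{eigenvector-decom} applied to each $\zeta_i$, together with the commutation, guarantees a joint eigenspace decomposition — the obstruction to solving for $h_j$ lies in the non-diagonal part, and one solves a linear ``cohomological'' equation whose denominators are differences of eigenvalues $a_{i1}m_1+\cdots+a_{in}m_n - a_{ij}$; the key point is that for each monomial of degree $r$ in $\fm^r/\fm^{r+1}$ one may choose the index $i$ for which this difference is nonzero (if it vanished for every $i$, the monomial would already be a joint eigenvector and contribute nothing to the error), so $h_j$ is well defined. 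The infinite composition $H=\cdots H_3\circ H_2$ converges in $\mathcal{G}$ because $H_r$ differs from the identity only in degrees $\geq r$, and the limiting automorphism conjugates every $\zeta_i$ to a derivation with $x_j\mapsto a_{ij}x_j$.

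An alternative, possibly cleaner, packaging is to avoid the order-by-order induction by invoking Proposition \ref{eigenvector-decom}(2) directly: after the linear reduction, $\zeta_1$ already acts semisimply, and one diagonalizes it outright via one automorphism so that $\zeta_1(x_j)=a_{1j}x_j$; then one shows $\zeta_2,\ldots,\zeta_m$ preserve the eigenspace decomposition of $F$ under $\zeta_1$ (because they commute with $\zeta_1$), works inside each $\zeta_1$-eigenspace, and induces on $m$. Both routes reduce the problem to the classical finite-dimensional statement plus a convergence check; I would likely present the order-by-order version since the $\fm$-adic completeness of $F$ makes the bookkeeping transparent and it does not require isolating the fixed-point/eigenspace structure of $\zeta_1$ on all of $F$ abstractly.

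I expect the main obstacle to be the convergence and well-posedness of the inductive correction: one must check that the correction terms $h_j\in\fm^r$ can be chosen compatibly (not just existentially at each stage) so that the partial compositions form a Cauchy sequence in $\mathcal{G}$ with respect to the $\fm$-adic topology, and that passing to the limit genuinely yields $\Ad_H\zeta_i(x_j)=a_{ij}x_j$ on the nose rather than merely ``to all finite orders.'' This is where the completeness of $F$ and the continuity of derivations (noted in the excerpt) are used, and it is the only place where real care, as opposed to routine linear algebra, is needed.
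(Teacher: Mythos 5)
Your primary route (the order-by-order correction) has a genuine gap, rooted in a misreading of the paper's definition of semisimplicity. In this paper a derivation $\xi\in\der_k^+(F)$ is semisimple if it has $n$ genuine eigenvectors \emph{in $\fm$} forming a basis of $\fm/\fm^2$ --- equivalently, $\xi$ is conjugate to a derivation having $x_1,\ldots,x_n$ as eigenvectors. This is strictly stronger than your reading, ``the induced endomorphism $\bar\zeta_i$ of $\fm/\fm^2$ is diagonalizable.'' For example, $\zeta(x_1)=2x_1+x_2^2$, $\zeta(x_2)=x_2$ has diagonal linear part but is not semisimple: the resonant term $x_2^2$ lies in the eigenspace of the linear part with eigenvalue $2=a_1$, so $(\zeta_{(1)}-2)h_1$ can never cancel it and no coordinate change removes it. This is exactly the failure mode of your inductive step: when the difference $a_{i1}m_1+\cdots+a_{in}m_n-a_{ij}$ vanishes for every $i$, the offending monomial is a joint eigenvector with the \emph{same} joint eigenvalue as $x_j$, and far from ``contributing nothing to the error,'' it is precisely an unremovable obstruction to $x_j$ being an eigenvector. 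As written, your scheme uses only diagonalizability of the linear parts and would therefore prove that every commuting family with diagonalizable linear parts is simultaneously linearizable, which is false already for $m=1$. (Compare the paper's proof of Theorem \ref{decom-derivation}, which runs exactly such an order-by-order correction and, for this reason, can only conclude that the residue is an eigenvector of $\xi'_{(1)}$ --- the surviving resonant terms are the nilpotent part.)

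Your ``alternative packaging'' is essentially the paper's actual proof and is the one that works, but the step you leave in one sentence is the one carrying the whole argument. Inducting on $m$ with $\zeta_1,\ldots,\zeta_p$ already diagonalized in the coordinates $x_1,\ldots,x_n$, the paper takes the $n$ genuine eigenvectors $f_1,\ldots,f_n\in\fm$ of $\zeta_{p+1}$ (this is where the correct definition of semisimplicity enters and where resonant terms are ruled out), decomposes each into joint eigencomponents for $\zeta_1,\ldots,\zeta_p$ --- commutativity guarantees each component is still a $\zeta_{p+1}$-eigenvector --- and then, grouping the variables by their weight tuples $w(x_j)=(r_{j1},\ldots,r_{jp})$, selects from these components a new coordinate system $h_1,\ldots,h_n$ inducing a basis of $\fm/\fm^2$. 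The inverse function theorem makes $x_i\mapsto h_i$ an automorphism $T$, and because $h_i$ has the same joint $(\zeta_1,\ldots,\zeta_p)$-eigenvalue as $x_i$, conjugation by $T^{-1}$ fixes the first $p$ derivations while diagonalizing $\zeta_{p+1}$. If you flesh out your second route with this selection argument you recover the paper's proof; the proposal as it stands, resting on the first route, is not correct.
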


\begin{proof}
We prove it by induction on $m$. For $m=1$ there is nothing to prove. Suppose that the result is true for $m=p$ and we proceed to justify the case that $m=p+1$.  By the induction hypothesis, we may assume in priori that $x_i$ is an eigenvector of $\zeta_j$ with eigenvalue $r_{ij}$ for $i=1,\ldots,n$ and $j=1,\ldots,p$. For any $p$-tuple $a=(a_1,\ldots,a_p)$ of scalars, let $F_{a}$ be the space of formal series which are eigenvectors of $\zeta_j$ with eigenvalue $a_j$ for $j=1\ldots,p$. Since every word is a simutaneously eigenvector of $\zeta_1,\ldots,\zeta_p$,  every formal series $f\in F$  can be uniquely expressed as
$$
f=\sum_{a} f_a, \quad f_a\in F_a,
$$
where $a=(a_1,\ldots,a_p)$ runs over all $p$-tuples of scalars with $a_j$ an eigenvalue of $\zeta_j$ for $j=1,\ldots, p$. Since $\zeta_{p+1}$ commutes with $\zeta_1,\ldots,\zeta_p$, it follows that if $f$ is an eigenvector of $\zeta_{p+1}$ then  $f_a$ is also an eigenvector of $\zeta_{p+1}$ with the same eigenvalue as that of $f$. Indeed, one has $$\zeta_j(\zeta_{p+1}(f_a)) = \zeta_{p+1}(\zeta_j(f_a)) =a\zeta_{p+1}(f_a),\quad   j=1,\ldots, p.$$ 
So if $\zeta_{p+1}(f) = bf$ then $\zeta_{p+1}(f)$ has two decompositions into simultaneous eigenvectors of $\zeta_1,\ldots, \zeta_p$ as
\[
\zeta_{p+1}(f) = \sum_a \zeta_{p+1}(f_a) =\sum_a bf_a.
\]
It follows immediately that $\zeta_{p+1}(f_a) = bf_a$.

Let  $w(x_j) := (r_{j1},\ldots, r_{jp})$ for $j=1,\ldots,n$. Let $X_1,\ldots, X_s$ be the partition of $X=\{x_1,\ldots,x_n\}$ by the relation that $x_i \sim x_j$ if and only if $w(x_i)=w(x_j)$. By permutation, we may assume that 
\[
X_1=\{x_1,\ldots, x_{l_1}\}, ~ X_2=\{x_{l_1+1},\ldots, x_{l_2}\},\ldots, X_s=\{ x_{l_{s-1}+1}, \ldots, x_{n}\}
\]
for some integers $0=l_0< l_1< l_2< \ldots <l_s=n$. Since $\zeta_{p+1}$ is semisimple, it has eigenvectors $f_1,\ldots, f_n\in \mathfrak{m}$  that form a basis of $\mathfrak{m}/\mathfrak{m}^2$.  By the above discussion, the set  
$$Y_i:=\{~ (f_1)_{w(x_{l_i})}, (f_2)_{w(x_{l_i})},\ldots, (f_n)_{w(x_{l_i})} ~\}$$ 
consists of simultaneous eigenvectors of $\zeta_1,\ldots, \zeta_{p+1}$. Moreover, $Y_i$
induces a spanning set of  the subspace $V_i \subseteq \mathfrak{m}/\mathfrak{m}^2$ spanned by $X_i$, so we may choose 
$$h_{l_{i-1}+1},\ldots, h_{l_i} \in Y_i$$ 
which form a basis of $V_i$ for $i=1,\ldots, s$. By the inverse function theorem (cf. \cite[Lemma 2.13]{HZ}), the algebra homomorphism $T: F\to F$ given by $x_i\mapsto h_i$ is an automorphism. We have
$$(\Ad_{T^{-1}}\zeta_{j})(x_i) = T^{-1}(\zeta_j(h_i)) = T^{-1} (r_{ij}h_i) = r_{ij}x_i, \quad i=1,\ldots, n, ~~ j=1,\ldots, p.$$
So $\Ad_{T^{-1}}\zeta_{j} =\zeta_j$ for $j=1,\ldots, p$. In addition, $h_i$ is an eigenvector of $\zeta_{p+1}$ by the construction, so $\Ad_{T^{-1}} \zeta_{p+1}$ has $x_1,\ldots, x_n$ as eigenvectors. Take $H=T^{-1}$, the result follows. 
\end{proof}

A derivation $\xi\in \der_k^+(F)$ is called  \emph{principle} if  $\xi(x_1),\ldots,\xi(x_n)$ are all homogeneous of degree $1$, that is they are all linear combinations of the generators $x_1,\ldots, x_n$; 

\begin{lemma}\label{principle-derivation}
Let $\xi\in \der_k^+(F)$ be a principle derivation with a decomposition $\xi=\xi' +\xi''$ such that $\xi' \in \der_k^+(F)$ is principle semisimple derivation, $\xi''\in \der_k^+(F)$ is principle  nilpotent derivation  and $[\xi',\xi'']=0$. Then for any homogeneous formal series $f\in F$ and any  scalar $b\in k$, there exists a homogeneous formal series $h\in F$ of the same degree as $f$, such that  $$(\xi-b) h - f$$ is  an eigenvector of $\xi'$ with eigenvalue $b$ (eigenvectors always include the zero vector). 
\end{lemma}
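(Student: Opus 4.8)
The plan is to pass to the finite-dimensional homogeneous components of $F$ and there invoke the additive Jordan--Chevalley decomposition. Since $\xi$, $\xi'$ and $\xi''$ are principal, each of them preserves the degree and so restricts to a linear endomorphism of the finite-dimensional space $F_d$ spanned by the words of length $d$; write $A$, $S$, $N$ for the restrictions of $\xi$, $\xi'$, $\xi''$ to $F_d$, so that $A=S+N$ and $SN=NS$. Noting that $\xi-b$ maps $F_d$ into $F_d$, the assertion to be proved is precisely that $f\in(A-b)(F_d)+\ker(S-b)$ for the given $f\in F_d$; equivalently, I aim to establish the identity
\begin{equation*}
F_d \;=\; (A-b)(F_d) + \ker(S-b).
\end{equation*}
Indeed, once $f=(A-b)(h)+g$ with $h\in F_d$ and $g\in\ker(S-b)$, the element $(\xi-b)h-f=-g$ is an eigenvector of $\xi'$ with eigenvalue $b$ (or zero), and $h$ is homogeneous of the same degree as $f$; for $f=0$ one takes $h=0$.

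Next I would check that $A=S+N$ is the Jordan--Chevalley decomposition of the operator $A$ on $F_d$, i.e. that $S$ is semisimple and $N$ nilpotent. Conjugating $\xi,\xi',\xi''$ simultaneously by a linear automorphism of $F$ --- which preserves the grading and, by the compatibility of $\Ad$ with brackets and with eigenvectors recalled at the start of this section, all of the hypotheses --- we may assume $\xi'(x_i)=r_ix_i$; then every length-$d$ word is an eigenvector of $S$ by Proposition \ref{eigenvector-decom}, so $S$ is diagonalizable. For $N$: since $\xi''$ is nilpotent, $\xi''|_{F_1}$ is a nilpotent operator, so choosing a Jordan basis $y_1,\dots,y_n$ of $F_1$ for $\xi''|_{F_1}$ we may attach to each $y_i$ a positive integer $\deg'(y_i)$, its position in its Jordan block, such that $\xi''(y_i)$ is either $0$ or homogeneous of $\deg'$ equal to $\deg'(y_i)-1$; extending $\deg'$ additively to words in the $y_i$, the Leibniz rule shows $\xi''$ strictly lowers $\deg'$, a quantity assuming only finitely many values on $F_d$, so $N$ is nilpotent. (The two changes of basis used here are different, but each conclusion --- $S$ semisimple, $N$ nilpotent --- is a coordinate-free property of the operator, so this causes no harm.)

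Granting this, the displayed identity is elementary. Decompose $F_d=\bigoplus_\lambda V_\lambda$ into the eigenspaces of $S$; each $V_\lambda$ is stable under $N$ (since $SN=NS$) and hence under $A$, and on $V_\lambda$ one has $A-b=(\lambda-b)\,\id+N|_{V_\lambda}$. If $\lambda\neq b$ this is a nonzero scalar plus a nilpotent operator, hence invertible, so $(A-b)(V_\lambda)=V_\lambda$. If $\lambda=b$ then $V_\lambda\subseteq\ker(S-b)$ trivially. Summing over $\lambda$ gives $F_d\subseteq(A-b)(F_d)+\ker(S-b)$, which is the identity wanted; taking $h$ to be a preimage of the $(A-b)$-component of $f$ completes the argument.

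The only step that is not routine bookkeeping is the nilpotency of $N$ on the whole of $F_d$ rather than merely on $F_1$ --- equivalently, that a principal derivation which is nilpotent on $\mathfrak{m}/\mathfrak{m}^2$ is nilpotent on every homogeneous component --- and for this the weight function $\deg'$ built from the Jordan structure of $\xi''|_{F_1}$ is the key device. Everything else is the standard behaviour of the additive Jordan decomposition under restriction to invariant subspaces.
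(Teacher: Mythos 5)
Your proof is correct and follows essentially the same route as the paper's: decompose the finite-dimensional degree-$d$ component into eigenspaces of $\xi'$, observe that $\xi-b$ acts on the eigenspace for eigenvalue $c$ as $(c-b)\,\id$ plus a nilpotent operator, hence is invertible when $c\neq b$, and solve for $h$ eigenspace by eigenspace. The only real difference is that you spell out why the principal nilpotent derivation $\xi''$ acts nilpotently on the whole degree-$d$ component (via the auxiliary weight built from the Jordan structure of $\xi''|_{F_1}$), a point the paper passes over with only a brief remark.
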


\begin{proof}
Suppose $f$ is of degree $p$. Let $F_{(p)}$ be the space of homogeneous formal series of degree $p$. For any scalar $c$,  let $F_{(p; c)}$ be the space of  formal series in $F_{(p)}$ which are eigenvectors of $\xi'$ with eigenvalue $c$. 
From the property that $\xi''$ commutes with $\xi'$,  we have that $\xi''$ acts nilpotently on $F_{(p;c)}$. Since the restriction map of $\xi -b\cdot \id$ on $F_{(p;c)}$ equals to the restriction map of $(c-b)\cdot \id -\xi''$ on $F_{(p;c)}$, it is invertible when $c\neq b$. Note that there exist scalars $c_1,\ldots, c_q\in k$ such that 
$$F_{(p)}= F_{(p;c_1)}\oplus\cdots \oplus F_{(p;c_q)}.$$ 
So $f$ has a decomposition $f=f_1+\cdots +f_q$ with $f_i\in F_{(p;c_i)}$ for $i=1,\ldots,q$. If $c_i\neq b$ then define $h_i \in F_{(p;c_i)}$ to be the preimage of $f_i$ under the restriction map of $\xi'+\xi'' - b\cdot \id$ on $F_{(p;c_i)}$, which is invertible by the above discussion; and if $c_i=b$ then define $h_i=0$. Now consider the formal series $h=h_1+\cdots +h_q \in F_{(p)}$. Clearly, if $b\not \in \{c_1,\ldots,c_q\}$ then $\xi'(h)+\xi''(h) - b\cdot h - f=0\in F_{(p;b)}$; and if $b=c_i$ for some $i$ then $\xi'(h)+\xi''(h) - b\cdot h - f=-f_i\in  F_{(p;b)}$. 
\end{proof}

\begin{theorem}(Jordan-Chevalley decomposition)\label{decom-derivation}
For every derivation $\xi\in \der_k^+(F)$, there exists a unique pair of derivations $\xi_S, \xi_N\in \der_k^+(F)$ such that
\[
\xi = \xi_S+\xi_N, 
\]
$\xi_S$ is semisimple, $\xi_N $ is nilpotent and $[\xi_S,\xi_N]=0$. Moreover, any derivation in $\der_k^+(F)$ commutes with $\xi$ if and only if it commutes with $\xi_S$ and $\xi_N$.
\end{theorem}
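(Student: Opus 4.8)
The plan is to obtain existence by conjugating $\xi$ into a normal form through a convergent infinite sequence of coordinate changes, and to obtain uniqueness (together with the final clause) by descending to the finite-dimensional truncations $F/\fm^r$ and invoking the classical Jordan--Chevalley theorem there.

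\emph{Existence.} Let $V:=\fm/\fm^2$ and let $\bar\xi\in\End(V)$ be the map induced by $\xi$, with classical decomposition $\bar\xi=\bar\xi_s+\bar\xi_n$. Picking a basis of $V$ that diagonalizes $\bar\xi_s$ amounts to replacing $\xi$ by $\Ad_{H_1}\xi$ for a linear $H_1\in\mathcal{G}$; after this the principal part of $\xi$ is $P=P_S+P_N$ with $P_S(x_i)=a_ix_i$, $P_N$ principal nilpotent, and $[P_S,P_N]=0$. I would then build automorphisms $H_p\in\mathcal{G}$ for $p\ge 2$ of the shape $H_p(x_i)=x_i+h_i^{(p)}$ with $h_i^{(p)}$ homogeneous of degree $p$ --- automorphisms by the inverse function theorem (cf.\ \cite[Lemma 2.13]{HZ}) --- so that after applying $H_p\circ\cdots\circ H_2$ the part of $(\Ad_{H_p\cdots H_2 H_1}\xi)(x_i)-a_ix_i$ in degrees $\le p$ is an eigenvector of $P_S$ with eigenvalue $a_i$ (the case $p=1$ being automatic since $[P_S,P_N]=0$ forces $P_N(x_i)$ to be such an eigenvector). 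In the inductive step one checks that $\Ad_{H_p}$ modifies the degree-$p$ component of $\xi(x_i)$ precisely by $[\eta_p,P](x_i)$, where $\eta_p$ is the derivation $x_i\mapsto h_i^{(p)}$ (this is the first-order term of $\Ad_{\exp\eta_p}=e^{\ad_{\eta_p}}$, the higher terms landing in degrees $>p$); so the step reduces to solving, in the finite-dimensional space $W_p:=(\fm^p/\fm^{p+1})^{\oplus n}$, the requirement that $\vec f-\ad_P(\eta_p)$ lie in $\ker(\ad_{P_S})$. This is always solvable: $\ad_P=\ad_{P_S}+\ad_{P_N}$ is the Jordan--Chevalley decomposition of $\ad_P$ on $W_p$ ($\ad_{P_S}$ is diagonalizable, $\ad_{P_N}$ is nilpotent, and they commute), so $\ad_P$ is invertible on the $\ad_{P_S}$-invariant complement $\im(\ad_{P_S})$ of $\ker(\ad_{P_S})$, which is enough. (Equivalently, one can solve this equation one index at a time by iterated use of Lemma \ref{principle-derivation}, after reordering the $x_i$ so that $\bar\xi_n$ is strictly upper triangular.) The composition $G:=\lim_p H_p\circ\cdots\circ H_2$ converges in $\mathcal{G}$ because $H_p\equiv\id\pmod{\fm^p}$. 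Then $H:=G H_1$, $\xi_S:=\Ad_{H^{-1}}P_S$, $\xi_N:=\xi-\xi_S$ does the job: $\xi_S$ is semisimple by construction, $[\xi_S,\xi_N]=[\xi_S,\xi]=\Ad_{H^{-1}}[P_S,\Ad_H\xi]=0$ by the normal form, and $\xi_N$ induces $\bar\xi-\bar\xi_s=\bar\xi_n$ on $V$, so is nilpotent.

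\emph{Uniqueness and the moreover part.} Since $\xi(\fm^r)\subseteq\fm^r$, $\xi$ descends to a derivation $\xi^{(r)}$ of the finite-dimensional algebra $A_r:=F/\fm^r$, and likewise any decomposition $\xi=\eta_S+\eta_N$ as in the statement descends to $\xi^{(r)}=\eta_S^{(r)}+\eta_N^{(r)}$. Here $\eta_S^{(r)}$ is a semisimple linear operator on $A_r$ --- being conjugate, via the linear automorphism of $A_r$ induced by an $H\in\mathcal{G}$ that diagonalizes $\eta_S$, to the operator scaling each word by the corresponding sum of eigenvalues --- and $\eta_N^{(r)}$ is nilpotent, since it preserves the finite filtration by the $\fm^j/\fm^r$ and acts on each graded piece $\fm^j/\fm^{j+1}\cong V^{\otimes j}$ as the derivation extension of the nilpotent $\bar\eta_N\in\End(V)$, which is again nilpotent. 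As $\eta_S^{(r)}$ and $\eta_N^{(r)}$ commute, $\xi^{(r)}=\eta_S^{(r)}+\eta_N^{(r)}$ is the classical Jordan--Chevalley decomposition of $\xi^{(r)}$; hence $\eta_S^{(r)},\eta_N^{(r)}$ do not depend on the chosen decomposition, and passing to the inverse limit over $r$ (so that $F=\varprojlim A_r$) shows $\eta_S=\xi_S$ and $\eta_N=\xi_N$. For the last clause: if $\zeta\in\der_k^+(F)$ commutes with $\xi$ then $\zeta^{(r)}$ commutes with $\xi^{(r)}$; since $k$ is perfect, $\xi_S^{(r)}$ is a polynomial in $\xi^{(r)}$, so $\zeta^{(r)}$ commutes with $\xi_S^{(r)}$ and with $\xi_N^{(r)}=\xi^{(r)}-\xi_S^{(r)}$ for every $r$, whence $[\zeta,\xi_S]=[\zeta,\xi_N]=0$; the converse is trivial.

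\emph{Main difficulty.} The delicate point is the inductive step in the existence part --- recognizing $\ker(\ad_{P_S})$ on $W_p$ as the space of ``unremovable'' terms and seeing that $\ad_P$ is invertible on its complement, equivalently organizing the repeated appeals to Lemma \ref{principle-derivation} in the presence of the nilpotent linear part $P_N$ --- together with checking that the resulting infinite product of automorphisms converges in $\mathcal{G}$ and does not disturb the lower-degree parts already normalized. Once that is in place, both uniqueness and the final clause are a routine descent of the classical finite-dimensional Jordan--Chevalley theorem along $F=\varprojlim F/\fm^r$.
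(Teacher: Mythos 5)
Your proposal is correct and follows essentially the same route as the paper: existence by recursively conjugating $\xi$ with automorphisms $x_i\mapsto x_i+h_i^{(p)}$ ($h_i^{(p)}$ homogeneous of degree $p$) until the normal form commutes with the principal semisimple derivation, and uniqueness together with the commutant statement by descending to the finite-dimensional truncations $F/\fm^r$ and invoking the classical Jordan--Chevalley theorem there. Your only deviation is cosmetic: you solve the degree-$p$ step all at once via the invertibility of $\ad_P$ on $\im(\ad_{P_S})\subseteq W_p$, whereas the paper applies its Lemma \ref{principle-derivation} index by index along the Jordan blocks --- as you yourself note, the two are equivalent.
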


The above decomposition of a derivation analogs to the Jordan-Chevalley decomposition of linear endomorphisms of finite dimensional vector spaces over an algebraically closed field (see \cite[Proposition 4.2]{Hump}).
We refer to  $\xi_S$ (resp. $\xi_N$) the \emph{semisimple part} (resp. \emph{nilpotent part}) of $\xi$. 

We will use the following notation in the argument  given below. Let $r\geq0$ be an integer. For any derivation $\eta\in \der_k^+(F)$, we write $\eta_{[r]}$ to be the induced endomorphism of  $\eta$ on  $F/\fm^{r+1}$. Note that if $\eta$ is semisimple (resp. nilpotent) as a derivation then $\eta_{[r]}$ is semisimple (resp. nilpotent) as a linear endomorphism.
For any formal series $f\in F$, we write $f_{(r)}$ (resp. $f_{(\leq r)}$) the sum of terms of degree $r$ (resp. $\leq r$) that occurs in $f$.  In addition, for any  derivation $\eta\in \der_k^+(F)$, we write $\eta_{(r)} \in \der_k^+(F)$ to be the derivation given  by  $x_i\mapsto \eta(x_i)_{(r)}$ for $i=1,\ldots, n$.   

\begin{proof}
First we show the uniqueness of the decomposition.
Suppose that $\xi=\xi_S'+\xi_N'$ and $\xi=\xi_S''+\xi_N''$ are two such decompositions. Then 
\[
\xi_{[s]} =(\xi_S')_{[s]}+(\xi_N')_{[s]} \quad \text{and} \quad \xi_{[s]} =(\xi_S'')_{[s]}+(\xi_N'')_{[s]}.
\]
Since $(\xi_S')_{[s]}, (\xi_S'')_{[s]}$ are semisimple and $(\xi_N')_{[s]},(\xi_N')_{[s]}$ are nilpotent,  one gets $(\xi_S')_{[s]}=(\xi_S'')_{[s]}$ and $(\xi_N')_{[s]} = (\xi_N'')_{[s]}$ by \cite[Proposition 4.2 (a)]{Hump}, for every integer $s\geq0$. Therefore, $\xi_S'= \xi_S''$ and $\xi_N'= \xi_N''$. This prove the uniqueness of the decomposition.

Next we show the last statement.  The converse implication is clear. To see the forward implication, assume $\eta\in \der_k^+(F)$ is a derivation commutes with $\xi$. By \cite[Proposition 4.2 (b)]{Hump}, 
\[
[\eta, \xi_S]_{[s]} = [\eta_{[s]},( \xi_S)_{[s]}] = 0, \quad s\geq0.
\]
Therefore, $[\eta,\xi_S] =0$ and hence $[\eta,\xi_N]= [\eta,\xi] -[\eta,\xi_S]=0$. This prove the last statement.

Finally we show the existence of the decomposition. Note that the action of $\mathcal{G}$ on $\der_k^+(F)$ respects the Lie bracket,  preserves semisimpleness and nilpotentness of derivations.   So we may assume in priori that  the restriction of $\xi_{(1)}$ on $F_{(1)}$ is of the Jordan normal form with respect to the ordered basis $x_1,\ldots,x_n$, that is there exists positive integers $l_1,\ldots,l_r$ with $$l_1+\cdots +l_r=n$$ and scalars $$a_1=\cdots = a_{l_1}, ~
a_{l_1+1}=\cdots = a_{l_1+l_2}, ~ \ldots, ~ a_{l_{1}+\cdots+l_{r-1}+1} = \cdots =a_{n}$$
such that $\xi_{(1)}(x_{i}) = a_i x_i$ for $i=1,l_1+1,l_1+l_2+1,\ldots$, and $\xi_{(1)}(x_i)= a_i x_i+x_{i-1}$ otherwise.  Let  $\xi_{(1)}'$ be the  derivation given by $x_i\mapsto a_i x_i$ for $i=1,\ldots, n$, and let $\xi_{(1)}'':=\xi_{(1)} -\xi_{(1)}''$.  Clearly, $\xi_{(1)}'$ is principle semisimple, $\xi_{(1)}''$ is principle nilpotent, and $[\xi_{(1)}',\xi_{(1)}'']=0$. 

We proceed to recursively construct  an infinite sequence of $n$-tuples of $(h^{(s)}_1,\ldots, h^{(s)}_n)$ of formal series in $F$  for $s\geq1$ such that 
\begin{enumerate}
\item[(1)] $h^{(s)}_i$ is homogeneous of degree $s$  for $i=1,\ldots, n$;
\item[(2)] $\big (\Ad_{ (H^{(s)}\circ \cdots \circ H^{(1)} )} \xi \big )(x_i)_{(\leq s)}$ is an  eigenvector of $\xi_{(1)}'$ with eigenvalue $a_i$ for $i=1,\ldots, n$,  where $H^{(r)}\in \mathcal{G}$ is the automorphism given by $x_j\mapsto x_j+h^{(r)}_j$ for $j=1,\ldots,n$. 
\end{enumerate}
Take $h^{(1)}_1=\cdots = h^{(1)}_n=0$, then the case that $s=1$ is fulfilled. Suppose that the required tuple $(h^{(s)}_1,\ldots, h^{(s)}_n)$ has been constructed for $s=1,\ldots p$. To simplify the notation, let 
$$
\xi^{(p)}:= \Ad_{ (H^{(p)}\circ \cdots \circ H^{(1)} )} \xi.
$$ 
By construction, 
$$
(\xi^{(p)})_{(1)} = \xi_{(1)}
$$
and $\xi^{(p)} (x_i)_{(\leq p)}$ is  an eigenvector of $\xi_{(1)}'$ with eigenvalue $a_i$  for $i=1,\ldots n$. 
By Lemma \ref{principle-derivation}, we may choose a homogeneous formal series $h^{(p+1)}_{i}$  of degree $p+1$ for $i=1, l_1+1,\cdots$  such that
$$
\varphi^{(p+1)}_i:=\big(\xi_{(1)}-a_i \big)(h^{(p+1)}_{i})  -\xi^{(p)}(x_i)_{(p+1)} 
$$
is an eigenvector  of $\xi_{(1)}'$ with eigenvalue $a_i$;  and then apply Lemma \ref{principle-derivation} again,   we may also choose inductively on other $i$ a homogeneous formal series $h^{(p+1)}_{i}$ of degree $p+1$  such that 
$$
\varphi^{(p+1)}_{i}:=\big(\xi_{(1)}-a_i \big)(h^{(p+1)}_{i})  - \big(\xi^{(p)}(x_i)_{(p+1)}+h^{(p+1)}_{i-1} \big)
$$
is  an eigenvector  of $\xi_{(1)}'$ with eigenvalue $a_i$. It is easy to check that
$$(H^{(p+1)})^{-1} : x_i\mapsto x_i - h^{(p+1)}_i + \text{\rm H.O.T.},  \quad i=1,\ldots,n.$$
Here, ${\rm H.O.T.}$ is an abbreviation for ''higher order terms''.
So for $i=1,\ldots,n$ one has
\begin{eqnarray*}
\Big ( \Ad_{ (H^{(p+1)}\circ \cdots \circ H^{(1)} )} \xi \Big)(x_i) 
&=& \big(\Ad_{H^{(p+1)}}\xi^{(p)}\big)(x_i) \\
&=& H^{(p+1)}\big (\xi^{(p)} (x_i-h^{(p+1)}_i) \big) + \text{\rm H.O.T.} \\
&=&\xi^{(p)} \big(x_i-h^{(p+1)}_i \big )_{(\leq p+1)} + H^{(p+1)}\big( \xi^{(p)} (x_i-h^{(p+1)}_i)_{(1)}\big )_{(p+1)} +\text{H.O.T.} \\
&=&\xi^{(p)}(x_i)_{(\leq p+1)} - \xi_{(1)} (h^{(p+1)}_i  ) + H^{(p+1)}\big( \xi_{(1)} (x_i)\big )_{(p+1)} +\text{H.O.T.} \\
&=& \xi^{(p)}(x_i)_{(\leq p)} - \varphi^{(p+1)}_i + \text{\rm H.O.T.}.
\end{eqnarray*}
Here, the third  equality holds because  $H^{(p+1)}(f) = f_{(\leq p+1)} + H^{(p+1)}(f_{(1)})_{(p+1)}$ modulo     
$\fm^{p+2}$ for any formal series $f\in F$; the fourth equality holds because $\xi^{(p)}(h^{(p+1)}_i)_{(\leq p+1)}= \xi_{(1)}(h^{(p+1)}_i)$ and  $\xi^{(p)}(f)_{(1)} =\xi_{(1)}(f_{(1)})$ for any formal series $f\in F$; and the last equality holds  because $\xi_{(1)}(x_i)$ is either $a_ix_i$ or $a_ix_i +x_{i-1}$ depending on $i$. Consequently, $\Big ( \Ad_{ (H^{(p+1)}\circ \cdots \circ H^{(1)} )} \xi \Big)(x_i)_{(\leq p+1)}$  is an eigenvector  of $\xi'_{(1)}$ with eigenvalue $a_i$ for $i=1,\ldots, n$.

Now let $g_i^{(s)}:= \big ( H^{(s)}\circ \cdots \circ H^{(1)} \big) (x_i)$ for $i=1,\ldots,n$ and $s\geq1$. Since
$g^{(s+1)}_{i} -g^{(s)}_i \in \mathfrak{m}^{s+1}$ for $s\geq 1$,  
the infinite sequence $(g_i^{(1)},g_i^{(2)}, g_i^{(3)},\ldots)$ converges to a formal series $g_i$. Clearly,
$$
(g_i)_{(\leq s)}=(g_i^{(s)})_{(\leq s)}, \quad s\geq1.
$$
Let $H\in \mathcal{G}$ be the automorphism given by $H(x_i) = g_i$  for $i=1,\ldots,n$.
It is easy to check that
$$
(\Ad_H\xi)(x_i)_{(\leq s)} = \big (\Ad_{ (H^{(s)}\circ \cdots \circ H^{(1)} )} \xi \big )(x_i)_{(\leq s)}, \quad s\geq1,
$$
so $(\Ad_H\xi)(x_i)$ is an eigenvector of $\xi_{(1)}'$ with eigenvalue $a_i$. In addition, one has 
$$(\Ad_H\xi)_{(1)} = \xi_{(1)},$$ 
so $\Ad_H\xi - \xi_{(1)}'$ is a nilpotent derivation. Let 
$\xi_S:= \Ad_{H^{-1}} \xi_{(1)}'$ and  $\xi_N:= \Ad_{H^{-1}} (\Ad_H\xi -\xi_{(1)}')$.
Then $\xi_S$ is semisimple, $\xi_N$ is nilpotent and $\xi=\xi_S+\xi_N$. Moreover, 
\begin{eqnarray*}
\Ad_H[\xi_S,\xi_N] = [\xi_{(1)}',\Ad_H\xi -\xi_{(1)}']=[\xi_{(1)}', \Ad_H\xi]  = 0.
\end{eqnarray*}
Thus $[\xi_S,\xi_N]=0$ and this complete the proof.
\end{proof}

\section{Noncommutative Saito theorem}\label{sec:proof}

This section is devoted to establish a noncommutative analogue of the well-known Saito's theorem on  hypersurfaces of isolated  singularity. Throughout, let  $F$ be a fixed complete free algebra $k\lgg x_1,\ldots,x_n\rgg$ over a field $k$. We assume that $k$ is algebraically closed and of characteristic $0$, and we consider the rational number field $\mathbb{Q}$ as a subfield of $k$ in the natural way.

\begin{theorem}(NC Saito Theorem)\label{Saito}
Let $\Phi\in F_{\ol{\cy}}$ be a  superpotential of order $\geq3$ such that the Jacobi algebra associated to $\Phi$ is  finite dimensional. Then $\Phi$ is quasi-homogeneous if and only if  $\Phi$ is right equivalent to a weighted-homogenous superpotential of type $(r_1,\ldots,r_n)$ for some rational numbers $r_1,\ldots,r_n$ lie strictly between $0$ and $1/2$. Moreover, in this case, all such types $(r_1, \ldots, r_n)$ agree with each other up to permutations on the indexes $1,\ldots, n$. 
\end{theorem}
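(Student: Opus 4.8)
The backward implication is exactly Lemma~\ref{weighted-imply-quasi}, so the work is in the forward direction and the uniqueness of the type. Assume $\Phi$ is quasi-homogeneous. By Lemma~\ref{quasi-derivation} there is a derivation $\xi\in\der_k^+(F)$ with $\Phi_{\#}(\xi)=\Phi$. The strategy is to replace $\xi$ by a well-chosen semisimple representative with positive rational eigenvalues and then transport $\Phi$, via an automorphism, to a superpotential that is literally weighted homogeneous. First I would apply Theorem~\ref{decom-derivation} to write $\xi=\xi_S+\xi_N$ with $\xi_S$ semisimple, $\xi_N$ nilpotent, $[\xi_S,\xi_N]=0$. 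I expect that the condition $\Phi_{\#}(\xi)=\Phi$ together with finite-dimensionality of $\Lm(F,\Phi)$ forces $\Phi_{\#}(\xi_S)=\Phi$ as well; the point is that $\Phi_{\#}(\xi_N)$ must vanish in $F_{\ol{\cy}}$, which one should be able to see by decomposing a representative of $\Phi$ into eigenvectors of $\xi_S$ (Proposition~\ref{eigenvector-decom}(2)), noting that the eigenvalue-$1$ component of $\Phi$ is all of $\Phi$ once $\xi_S$ acts as identity on it, and using that $\xi_N$ acts nilpotently and preserves the finite set of eigenvalues. Replacing $\xi$ by $\xi_S$ and conjugating by a suitable $H\in\cG$ (using the definition of semisimplicity), I may assume $\xi(x_i)=a_ix_i$ for scalars $a_i\in k$, and correspondingly $\Phi$ is replaced by $H^{-1}(\Phi)$, still quasi-homogeneous with $\Phi_{\#}(\xi)=\Phi$.

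Next I would extract the weighted-homogeneous representative. Since $\xi$ is now diagonal with eigenvalues $a_i$, every word $w=x_{i_1}\cdots x_{i_p}$ is an eigenvector of $\xi$ with eigenvalue $a_{i_1}+\cdots+a_{i_p}$, so the canonical representative $\phi$ of $\Phi$ decomposes as $\phi=\sum_a\phi_a$ over eigenvalues $a$ of $\xi$. The relation $\Phi_{\#}(\xi)=\Phi$, i.e. $\pi(\xi(\phi))=\pi(\phi)$, reads $\sum_a(a-1)\pi(\phi_a)=0$; since distinct eigenvalues give elements of distinct eigenspaces in $F_{\ol{\cy}}$ (the projection $\pi$ intertwines $\xi$ with its induced action, and words in the same necklace have the same eigenvalue), this forces $\pi(\phi_a)=0$ for $a\neq1$, hence $\phi$ is cyclically equivalent to $\phi_1$, an eigenvector of eigenvalue $1$. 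Thus $\Phi$ itself has a representative that is a linear combination of words of $\xi$-weight $1$. To match Definition~\ref{def:wh} I must arrange $a_i\in\QQ$ with $0<a_i\le 1/2$. Rationality: the monomials of weight $1$ appearing in $\phi_1$ give a system of linear equations $\sum_j c_{ij}a_j=1$ with $c_{ij}\in\NN$; because $\Lm(F,\Phi)$ is finite-dimensional the $\xi$-eigenspace picture on $F$ is rigid enough — more precisely one shows the $a_j$ are uniquely determined by the algebra, hence fixed by $\mathrm{Gal}(k/\QQ)$, hence rational. Positivity and the bound $1/2$: since $\mathrm{ord}(\Phi)\ge 3$, every word in $\phi_1$ has length $\ge 3$, and once positivity of all $a_i$ is established (which I would get from finite-dimensionality: a zero or negative weight would produce infinitely many independent elements of $\Lm(F,\Phi)$, cf.\ the standard Hilbert-series argument), each $a_i$ appears in a relation of the form $a_i+(\text{sum of}\ge 2\ \text{positive weights})=1$, forcing $a_i<1$, and with a little more care (each generator $x_i$ must appear in some Jacobian relation, which is a cyclic derivative of a weight-$1$ word, of length $\ge 2$) one gets $a_i\le 1/2$.

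For the uniqueness of the type: suppose $\Phi$ is right equivalent to weighted-homogeneous superpotentials of types $(r_1,\ldots,r_n)$ and $(r_1',\ldots,r_n')$. Pulling back along the two automorphisms, this gives two semisimple derivations $\xi,\xi'\in\der_k^+(F)$, conjugate to the diagonal derivations with eigenvalues $(r_i)$ and $(r_i')$ respectively, each satisfying $\Phi_{\#}(-)=\Phi$ after the appropriate conjugation; equivalently, up to the $\cG$-action there are two "Euler" derivations for the same $\Phi$. I would argue that the semisimple part of such a derivation is unique up to conjugacy: if $\xi,\xi'$ both satisfy $\Phi_{\#}(\xi)=\Phi_{\#}(\xi')=\Phi$ with $\Lm(F,\Phi)$ finite-dimensional, then $\xi-\xi'$ descends to a derivation of $\Lm(F,\Phi)$ killing the class of $\Phi$, and on a finite-dimensional algebra one can average/integrate to show the semisimple parts are conjugate — their multisets of eigenvalues on $\fm/\fm^2$ therefore coincide, which is precisely the claim that $(r_1,\ldots,r_n)$ and $(r_1',\ldots,r_n')$ agree up to permutation.

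\textbf{Main obstacle.} I expect the genuinely delicate point to be the passage from "$\Phi$ admits an Euler derivation $\xi$" to "$\Phi$ admits a \emph{semisimple} Euler derivation with \emph{rational, positive, $\le 1/2$} eigenvalues": showing $\Phi_{\#}(\xi_N)=0$ so that $\xi_S$ alone suffices, and then pinning down rationality and the numerical bounds, are where finite-dimensionality of the Jacobi algebra has to be used in an essential (not merely formal) way, and this is the step most analogous to — and technically heavier than — the corresponding step in Saito's classical argument. The uniqueness statement, by contrast, should follow more softly once the conjugacy-uniqueness of semisimple Euler derivations on a finite-dimensional algebra is set up correctly.
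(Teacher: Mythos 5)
Your skeleton for the forward implication is the same as the paper's (Lemma \ref{quasi-derivation} to get an Euler derivation, Theorem \ref{decom-derivation} to split it, kill the nilpotent part, diagonalize, read off weight-one words), but the two places you yourself flag as delicate are exactly where your argument has genuine gaps. First, rationality: your claim that the eigenvalues $a_j$ of $\xi_S$ are ``uniquely determined by the algebra, hence fixed by $\mathrm{Gal}(k/\QQ)$, hence rational'' does not work as stated --- nothing in the setup is defined over $\QQ$, no Galois action on the data has been constructed, and uniqueness of the $a_j$ is itself something that needs proof (it is essentially Lemma \ref{semisimple-unique}, which requires the derivations to commute). The paper avoids claiming the $a_j$ are rational a priori: it chooses a $\QQ$-basis $c_1,\ldots,c_p$ of $\QQ a_1+\cdots+\QQ a_n+\QQ b$, writes the $a_j$ and $b$ in terms of it, and manufactures rational $r_j$ satisfying the same relations $\sum_i m_i a_i = b$ ($m_i\in\NN$) that govern which words can occur in $\Phi$. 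Second, the bounds: your ``standard Hilbert-series argument'' for positivity is not available here, because the Jacobi ideal is generated by eigenvectors of weights $1-a_j$ whose signs you do not yet control, so non-positive weights do not obviously produce infinitely many independent classes. The paper instead needs Lemma \ref{word-constitution} (Krull's height theorem applied to the abelianization, using order $\geq 3$) and a bootstrapping pair of inequalities $P_\varepsilon\leq Q_{2\varepsilon+1/2}$, $Q_\varepsilon\leq P_{2\varepsilon}$ to exclude both $r_i\leq 0$ and $r_i\geq 1/2$ at once; note also that the theorem asserts the strict bound $r_i<1/2$, while your sketch only reaches $r_i\leq 1/2$.

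The uniqueness statement is where your proposal is essentially missing an argument. The two semisimple Euler derivations coming from the two weighted-homogeneous presentations do not commute, so Lemma \ref{semisimple-unique} cannot be applied to them directly, and it is not clear that a derivation $\xi$ with $\Phi_{\#}(\xi)=\Phi$ descends to $\Lm(F,\Phi)$ at all; ``average/integrate on a finite-dimensional algebra'' is not a proof. The paper's route is to set $\zeta=\Ad_{H^{-1}}\eta$, decompose it into $\xi$-weight components $\zeta_u$, show $\Phi_{\#}(\zeta_0)=\Phi$ so that $(\zeta_0)_S=\xi$ by Lemmas \ref{semisimple-preimage} and \ref{semisimple-unique} (these two \emph{do} commute), and then prove that $\zeta$ and $\zeta_0$ have the same characteristic polynomial on $\fm/\fm^2$; that last step uses the vanishing of the linear parts of the negative-weight components $\zeta_u(x_i)$, which is established by a regular-sequence argument on the abelianized Jacobian relations. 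None of this is recoverable from your sketch, so the uniqueness claim remains unproved in your proposal.
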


We address the proof of the above theorem after several lemmas.

\begin{lemma}\label{zero-criterion-develop}
Develop a formal series $f\in F$ in eigenvectors of a semisimple derivation $\xi\in\der_k^+(F)$ as $f=\sum_a f_a$. Then $f\in [F,F]^{cl}$ if and only if $f_a\in [F,F]^{cl}$ for each eigenvalue $a$ of $\xi$.
\end{lemma}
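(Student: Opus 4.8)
The plan is to exploit that the semisimple derivation $\xi$ acts diagonalizably on $F$, with the eigenspace decomposition respecting the $\fm$-adic filtration, and that the commutator subspace $[F,F]$ and its closure are $\xi$-stable. First I would reduce to the case where $\xi$ is in normal form: by the definition of semisimple, there is $H\in\cG$ with $\Ad_H\xi$ having eigenvectors $x_1,\ldots,x_n$; since $H$ is a continuous algebra automorphism it carries $[F,F]^{cl}$ bijectively onto itself and carries the eigenspace decomposition of $f$ to that of $H(f)$, so the statement for $\xi$ and $f$ is equivalent to the statement for $\Ad_H\xi$ and $H(f)$. Hence I may assume $\xi(x_i)=a_ix_i$, so that every word $w=x_{i_1}\cdots x_{i_p}$ is an eigenvector with eigenvalue $a_{i_1}+\cdots+a_{i_p}$, as in Proposition \ref{eigenvector-decom}.

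The backward implication is immediate: if each $f_a\in[F,F]^{cl}$ then $f=\sum_a f_a$ is a (convergent) sum of elements in the closed subspace $[F,F]^{cl}$, hence lies in it. For the forward implication, the key observation is that $[F,F]$ is spanned by elements $uv-vu$ where $u,v$ are words, and each such element is itself $\xi$-homogeneous: $uv$ and $vu$ have the same eigenvalue, namely the sum of the $a_i$ over all letters appearing in $uv$. Therefore $[F,F]=\bigoplus_a\bigl([F,F]\cap F_a\bigr)$, where $F_a$ is the $a$-eigenspace. I would then promote this to the closure: an element $g\in[F,F]^{cl}$ is, for every $r$, congruent mod $\fm^r$ to an element of $[F,F]$; projecting onto the $a$-eigenspace (a continuous operation compatible with the $\fm$-adic filtration, since $F_a\cap\fm^r$ is spanned by the standard generating words of degree $\geq r$ with letter-sum $a$) shows $g_a\in[F,F]^{cl}$ as well. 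Applying this with $g=f$ gives $f_a\in[F,F]^{cl}$ for every eigenvalue $a$.

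The main obstacle I anticipate is the interaction between the eigenspace projection and the topological closure: one must check that projecting onto the eigenspace $F_a$ is continuous and that the decomposition $F=\prod_a F_a$ (a direct product, since there may be infinitely many eigenvalues and elements are formal series) restricts compatibly to the filtration by powers of $\fm$. Concretely, I would verify that $F_a\cap(\fm^r+[F,F])$ maps into $[F,F]^{cl}$ under no further work, using that the projection $F\to F_a$ sends $\fm^r$ into $\fm^r\cap F_a$ and $[F,F]$ into $[F,F]\cap F_a$; combined with $[F,F]^{cl}=\bigcap_r([F,F]+\fm^r)$ this closes the argument. Everything else is the routine bookkeeping of convergent sums in the $\fm$-adic topology.
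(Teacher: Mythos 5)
Your proposal is correct and follows essentially the same route as the paper: reduce by conjugation to the case $\xi(x_i)=a_ix_i$, observe that commutators of words are $\xi$-eigenvectors so that the eigenspace decomposition is compatible with $[F,F]$, and pass to the closure via $[F,F]^{cl}=\bigcap_r([F,F]+\fm^r)$. The paper's proof is just a terser version of this; your extra care with the continuity of the eigenspace projections fills in details the paper leaves implicit.
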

\begin{proof}
Since any automorphism of $F$ preserves $[F,F]^{cl}$, we may assume that  $\xi$ has $x_1,\ldots, x_n$ as eigenvectors. The result follows from the facts that the commutator of any two words is an eigenvector of $\xi$ and every formal series in $[F,F]^{cl}$ is a formal sum of such commutator. 
\end{proof}

\begin{lemma}\label{nilpotent-derivation}
Let $\Phi\in F_{\ol{\cy}}$ be a superpotential such that $\Phi_{\#}(\xi) = b\cdot \Phi$ for some scalar $b \in k$ and some nilpotent derivation $\xi\in \der_k^+(F)$. Then either $\Phi=0$ or $b=0$.
\end{lemma}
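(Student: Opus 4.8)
The plan is to argue by contradiction, supposing both $\Phi \neq 0$ and $b \neq 0$, and to extract from this a contradiction with the nilpotence of $\xi$ on $\fm/\fm^2$. First I would pass to the canonical representative $\phi \in F$ of $\Phi$ and let $p \geq 2$ be the order of $\Phi$, so that $\phi = \phi_{(p)} + (\text{higher order terms})$ with $\phi_{(p)} \neq 0$ the lowest-degree homogeneous component. The hypothesis $\Phi_{\#}(\xi) = b \cdot \Phi$ means $\pi(\xi(\phi)) = b\,\pi(\phi)$ in $F_{\ol{\cy}}$, i.e. $\xi(\phi) - b\phi \in [F,F]^{cl}$. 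The idea is to look at this identity degree by degree. Since $\xi \in \der_k^+(F)$, the lowest-degree part of $\xi(\phi)$ lives in degree $\geq p$, and its degree-$p$ component is $\xi_{(1)}(\phi_{(p)})$, where $\xi_{(1)}$ is the linear (principal) part of $\xi$. Comparing degree-$p$ components and using that $[F,F]^{cl}$ is graded-compatible (its degree-$p$ part is $[F,F]_{(p)}$, spanned by commutators of words of total degree $p$), I obtain
\[
\xi_{(1)}(\phi_{(p)}) - b\,\phi_{(p)} \in [F,F]_{(p)}.
\]
Equivalently, writing $\ol{\xi_{(1)}}$ for the induced endomorphism of the finite-dimensional space $F_{(p)}/[F,F]_{(p)}$, we get $\ol{\xi_{(1)}}(\ol{\phi_{(p)}}) = b\,\ol{\phi_{(p)}}$ with $\ol{\phi_{(p)}} \neq 0$ (here I use that $\phi$ is the canonical representative, so $\phi_{(p)}$ is a nonzero combination of standard words and hence has nonzero image modulo commutators — this point needs a small check, and is where the order-$\geq 3$ hypothesis or at least order $\geq 2$ and the standard-word normal form enters).

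The key step is then: since $\xi$ is nilpotent, the endomorphism it induces on $\fm/\fm^2 = F_{(1)}$ is nilpotent, and therefore the induced endomorphism $\ol{\xi_{(1)}}$ on $F_{(p)}/[F,F]_{(p)}$ is also nilpotent. Indeed $\xi_{(1)}$ acts on the degree-$p$ tensor-like piece built from $F_{(1)}$ as a "sum of $p$ commuting copies" of a nilpotent operator (more precisely, $\xi_{(1)}$ acts on $F_{(p)} \cong F_{(1)}^{\otimes p}$ as $\sum_{j} 1^{\otimes (j-1)} \otimes \xi_{(1)}|_{F_{(1)}} \otimes 1^{\otimes(p-j)}$, which is nilpotent because each summand is and they commute), and this descends to the quotient by commutators. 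A nilpotent operator has only the eigenvalue $0$, so $\ol{\xi_{(1)}}(\ol{\phi_{(p)}}) = b\,\ol{\phi_{(p)}}$ with $\ol{\phi_{(p)}} \neq 0$ forces $b = 0$, contradicting our assumption. Hence $\Phi = 0$ or $b = 0$.

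The main obstacle I anticipate is the bookkeeping at the bottom degree: making rigorous that the degree-$p$ component of $[F,F]^{cl}$ is exactly $[F,F]_{(p)}$ (the span of commutators $[u,v]$ of words with $\deg u + \deg v = p$), and, crucially, that $\ol{\phi_{(p)}} \neq 0$ in $F_{(p)}/[F,F]_{(p)}$. For the latter one should note that $F_{(p)}/[F,F]_{(p)}$ has as a basis the set of necklaces (conjugacy classes of words) of length $p$, and the canonical representative expresses $\Phi$ as a genuine nonzero combination of standard words, i.e. of distinct necklaces; so its image in the quotient is nonzero precisely because $\Phi \neq 0$ has order exactly $p$. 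An alternative route that sidesteps truncation is to invoke Proposition \ref{eigenvector-decom}-style reasoning: but $\xi$ is nilpotent, not semisimple, so one cannot directly decompose into eigenvectors; the cleanest path really is the lowest-degree truncation argument together with the Jordan-Chevalley decomposition philosophy (the semisimple part of a nilpotent derivation is zero), which is morally why only the eigenvalue $0$ can occur.
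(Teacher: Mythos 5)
Your proof is correct and follows essentially the same route as the paper's: both truncate at the lowest degree $p$ of the canonical representative, observe that the induced action on the degree-$p$ piece is nilpotent, and conclude $b=0$ from the fact that standard words are linearly independent modulo commutators. The only cosmetic difference is that the paper iterates the relation to get $\xi^r(f)-b^rf\in[F,F]^{cl}$ and uses $\xi^r(\fm^p)\subseteq\fm^{p+1}$ for $r\gg0$, whereas you phrase the same nilpotence directly as the statement that $b$ is an eigenvalue of the nilpotent operator $\ol{\xi_{(1)}}$ on $F_{(p)}/[F,F]_{(p)}$; your worry about an order hypothesis is unnecessary, since the lemma needs none.
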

\begin{proof}
Suppose $\Phi\neq0$. Let $f$ be the canonical representative of $\Phi$. Develop $f$ as $f=\sum_{i\geq p} f_{(i)}$ with $f_{(i)}$ homogeneous of degree $i$ and $f_{(p)}\neq0$. Since $\xi$ is nilpotent, $\xi^r (\fm^p) \subseteq \fm^{p+1}$ for some $r\gg0$. So $\xi^r(f)$ has a decomposition $\xi^r(f)= \sum_{i\geq p+1} \xi^r(f)_{(i)}$ with $\xi^r(f)_{(i)}$ homogeneous of degree $i$. Then 
\[
b^r f_{(p)} + \sum_{i\geq p+1}\big (b^r f_{(i)} - \xi^r(f)_{(i)} \big) = b^r  f - \xi^r(f) \in [F,F]^{cl}.
\]
Consequently, $b^r f_{(p)}\in [F,F]^{cl}$. Since $f_{(p)}$ is in the canonical form, $b^rf_{(p)} =0$ and hence $b=0$. 
\end{proof}

\begin{lemma}\label{semisimple-preimage}
Let $\Phi\in F_{\ol{\cy}}$ be a superpotential such that  $\Phi_{\#}(\xi) = b \cdot \Phi$ for some scalar $b\in k$ and  some derivation $\xi\in \der_k^+(F)$.  Then $\Phi_{\#} (\xi_S) = b\cdot \Phi$ and $\Phi_{\#}(\xi_N)=0$, where $\xi_S$ and $\xi_N$ are the semisimple part  and the nilpotent part of $\xi$ respectively. 
\end{lemma}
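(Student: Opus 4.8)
The plan is to decompose the canonical representative of $\Phi$ into eigenvectors of the semisimple derivation $\xi_S$ and compare the action of $\xi$ with that of $\xi_S$ on each graded (by eigenvalue) piece. By Theorem \ref{decom-derivation} we have $\xi = \xi_S + \xi_N$ with $\xi_S$ semisimple, $\xi_N$ nilpotent and $[\xi_S,\xi_N]=0$. After applying a suitable automorphism $H\in\mathcal{G}$ (which changes nothing essential, since $\Phi_\#(\Ad_H\eta)(H(\Phi)) = b\cdot H(\Phi)$ iff $\Phi_\#(\eta) = b\cdot\Phi$, and $H$ preserves semisimpleness, nilpotency, the bracket, and the order of $\Phi$), we may assume $\xi_S$ has $x_1,\ldots,x_n$ as eigenvectors, with eigenvalues $a_1,\ldots,a_n$. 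Then every word is an eigenvector of $\xi_S$ by Proposition \ref{eigenvector-decom}, and we may write the canonical representative $\phi$ of $\Phi$ as a convergent sum $\phi = \sum_c \phi_c$ over eigenvalues $c$ of $\xi_S$, where $\phi_c$ is the sum of the standard words in $\phi$ whose $\xi_S$-eigenvalue equals $c$; set $\Phi_c := \pi(\phi_c)$, so $\Phi = \sum_c \Phi_c$ and $\Phi_\#(\xi_S) = \sum_c c\cdot\Phi_c$ term by term (using continuity of $\xi_S$ and $\pi$).

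The key step is to isolate, for each eigenvalue $c$, the contribution of $\xi_N$. Since $\xi_N$ commutes with $\xi_S$, it preserves each eigenspace $F_c$ of $\xi_S$; hence $\xi_N(\phi_c) \in F_c$, and therefore $\xi(\phi_c) = \xi_S(\phi_c) + \xi_N(\phi_c) = c\,\phi_c + \xi_N(\phi_c)$ with $\xi_N(\phi_c)\in F_c$. Applying $\pi$ and summing over $c$,
\[
b\cdot\Phi = \Phi_\#(\xi) = \sum_c \big( c\cdot \Phi_c + \pi(\xi_N(\phi_c)) \big),
\]
where $\pi(\xi_N(\phi_c)) = \Phi_\#(\xi_N)_c$ lies in the $c$-eigencomponent. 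Comparing the $c$-eigencomponents on both sides (using Lemma \ref{zero-criterion-develop}, which guarantees that the decomposition into eigencomponents is compatible with passing to $F_{\ol{\cy}}$, i.e. the components are determined modulo $[F,F]^{cl}$ independently of one another), we obtain for every eigenvalue $c$ of $\xi_S$:
\[
b\cdot\Phi_c = c\cdot\Phi_c + \Phi_\#(\xi_N)_c .
\]

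It remains to feed this into Lemma \ref{nilpotent-derivation}. Summing the displayed identities against the appropriate scalars, first note that $\Phi_\#(\xi_N) = \sum_c \Phi_\#(\xi_N)_c = \sum_c (b-c)\Phi_c$. Now observe $\xi_N$ also commutes with $\xi_S$-eigenprojections in the sense above, so $\xi_N$ sends the $c$-component to the $c$-component; thus $\Phi_\#(\xi_N)$ restricted to the $c$-eigencomponent is $\pi(\xi_N(\phi_c))$, i.e. $\Phi_\#(\xi_N)_c = (b-c)\Phi_c$ as just derived. Applying $\xi_N$ once more and iterating — since $[\xi_S,\xi_N]=0$, all powers $\xi_N^r$ preserve $F_c$ — one gets $\Phi_\#(\xi_N^r)_c = (b-c)\Phi_c$ for the component, but more to the point: for each fixed $c$, the superpotential $\Phi_c$ satisfies $\pi(\xi_N(\phi_c)) = (b-c)\Phi_c$, i.e. $(\Phi_c)_\#(\xi_N|_{F_c})$—better, we argue directly. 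Restricting to the subpotential $\Phi_c$: $\xi_N$ is nilpotent on $F$, hence nilpotent on the $\xi_S$-eigenspace $F_c$ it preserves (it induces a nilpotent endomorphism on $\fm/\fm^2$ still), and $\Phi_\#(\xi_N)$ has $c$-component $(b-c)\Phi_c$; combined with the fact that $\xi_N$ raises nothing out of $F_c$, we get that $\Phi_c$ itself is a superpotential with $(\Phi_c)_\#(\xi_N) = (b-c)\cdot\Phi_c$. By Lemma \ref{nilpotent-derivation}, either $\Phi_c = 0$ or $b = c$. In either case $c\cdot\Phi_c = b\cdot\Phi_c$ and $\Phi_\#(\xi_N)_c = (b-c)\Phi_c = 0$. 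Summing over all $c$ gives $\Phi_\#(\xi_S) = \sum_c c\cdot\Phi_c = \sum_c b\cdot\Phi_c = b\cdot\Phi$ and $\Phi_\#(\xi_N) = 0$, as required.

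The main obstacle I anticipate is the bookkeeping that makes ``restrict to the $c$-eigencomponent'' into a legitimate application of Lemma \ref{nilpotent-derivation}: one must check that $\phi_c$ (or rather its image $\Phi_c$) is genuinely a well-defined superpotential, that $\xi_N$ restricted to the relevant eigenspace is still nilpotent in the sense of inducing a nilpotent map on $\fm/\fm^2$ (this uses that $\xi_N$ preserves $F_c$ and that $\xi_S$-eigenspaces are spanned by words, so degree-$1$ parts behave correctly), and that the eigencomponent decomposition descends faithfully to $F_{\ol\cy}$ — all of which are handled by Proposition \ref{eigenvector-decom}, Lemma \ref{zero-criterion-develop}, and continuity, but require care to state precisely.
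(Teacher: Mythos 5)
Your proposal is correct and follows essentially the same route as the paper's proof: develop the canonical representative into $\xi_S$-eigenvectors, use $[\xi_S,\xi_N]=0$ to see that $\xi_N$ preserves each eigencomponent, compare components via Lemma \ref{zero-criterion-develop} to get $(\Phi_c)_{\#}(\xi_N)=(b-c)\cdot\Phi_c$, and apply Lemma \ref{nilpotent-derivation} to conclude that each component has $c=b$ or vanishes. The only difference is cosmetic (the paper works with a general semisimple $\xi_S$ without first diagonalizing by an automorphism, and states the final step more directly), so no changes are needed beyond tightening the somewhat meandering paragraph where you iterate $\xi_N$ before ``arguing directly.''
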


\begin{proof}
Let $f$ be the canonical representative of $\Phi$.
Develop $f$ in eigenvectors of $\xi_S$ as $f=\sum_{a} f_a$. Since  $\pi(\xi(f) -bf)= \Phi_{\#}(\xi) -b\cdot \Phi =0$, where $\pi: F\to F_{\ol{\cy}}$ is the projection map, we have
$$
 \sum_a \xi_N(f_a)  + (a-b)f_a    \in  [F,F]^{cl}.
$$
Since $\xi_S(\xi_N(f_a)) = \xi_N(\xi_Sf_a) = a\xi_N(f_a)$, it follows that $(a-b)f_a +\xi_N(f_a)$ is an eigenvector of $\xi_S$ with eigenvalue $a$. Then Lemma \ref{zero-criterion-develop} tells us that 
\[
\pi(f_a)_{\#} (\xi_N) - (b-a) \cdot \pi (f_a) = \pi \big( \xi_N(f_a)+ (a-b) f_a \big ) =0
\] for every eigenvalue $a$ of $\xi_S$. So by Lemma \ref{nilpotent-derivation}, either $a=b$ or $f_a\in [F,F]^{cl}$ for every eigenvalue $a$ of $\xi_S$. Now we have two cases. If $b$ is not an eigenvalue of $\xi_S$ then $f\in [F,F]^{cl}$ and hence $\xi_S(f)- bf \in [F,F]^{cl}$; if $b$ is an eigenvalue of $\xi_S$ then $f- f_b =\sum_{a\neq b} f_a \in [F,F]^{cl}$ and hence 
\[\xi_S(f) -bf = \xi_S(f-f_b) -  b(f-f_b) \in [F,F]^{cl}.\] 
In both cases,  $\Phi_{\#} (\xi_S) - b\cdot \Phi = \pi(\xi_S(f) -bf) =0$.  Finally, $\Phi_{\#}(\xi_N) =\Phi_{\#}(\xi) - \Phi_{\#}(\xi_S) =0$.
\end{proof}

\begin{lemma}\label{word-constitution}
Let $\Phi\in F_{\ol{\cy}}$ be a  superpotential with finite dimensional  Jacobi algebra. Suppose 
\[
\Phi = \pi( g_{l+1}x_{l+1}) +\cdots + \pi(g_nx_n) + \pi(h),
\]
where $l<n$,  $g_{l+1},\ldots, g_n \in k\lgg x_1,\ldots, x_l\rgg$ and all monomials in $h \in F$  are of total degree $\geq 2$ in $x_{l+1},\ldots, x_n$. Then $l\leq n/2$ and there are at least $l$ nonzero formal series among $g_{l+1},\ldots, g_n$.
\end{lemma}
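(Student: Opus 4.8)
The plan is to pass to the quotient of $F$ by the closed ideal $I:=(x_{l+1},\ldots,x_n)^{cl}$ generated by the ``high'' variables, determine what the Jacobi ideal becomes there, and then finish by commutative algebra. First, we may assume $g_{l+1},\ldots,g_n\in\fm$ --- equivalently that $\Phi$ has order $\geq 2$ --- since this is the case in all applications; it is used only to prevent some $g_j$ from being a unit.

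The one delicate step is to compute the cyclic derivatives of $\Phi$ modulo $I$ from the representative $\phi=\sum_{j=l+1}^n g_jx_j+h$. If $i\leq l$, then every occurrence of $x_i$ in $\phi$ lies inside some $g_j\in k\lgg x_1,\ldots,x_l\rgg$ or inside $h$; in the former case the resulting term of $D_{x_i}(g_jx_j)$ still ends in the letter $x_j$, and in the latter it still carries the $\geq 2$ high-index letters inherited from that monomial of $h$ --- so $D_{x_i}\Phi\in I$. If $i>l$, the unique occurrence of $x_i$ outside $h$ is the trailing $x_i$ of $g_ix_i$, which contributes exactly $g_i$, whereas $D_{x_i}(h)\in I$ because every monomial of $h$ has degree $\geq 2$ in the high variables --- so $D_{x_i}\Phi-g_i\in I$. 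Since $k$ is a field, $\rJ(F,\Phi)=(D_{x_1}\Phi,\ldots,D_{x_n}\Phi)^{cl}$ by \cite[Lemma 2.6]{HZ}, and the two computations give
\[
\big(\rJ(F,\Phi)+I\big)^{cl}=\big(I+(g_{l+1},\ldots,g_n)\big)^{cl}.
\]

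Because $\rJ(F,\Phi)$ is contained in the left-hand side, $B:=F/\big(\rJ(F,\Phi)+I\big)^{cl}$ is a quotient of the finite-dimensional algebra $\Lm(F,\Phi)$, hence finite dimensional; and by the displayed identity $B\cong R/(g_{l+1},\ldots,g_n)^{cl}$, where $R:=F/I\cong k\lgg x_1,\ldots,x_l\rgg$. Passing to abelianizations, $R^{ab}=k[[x_1,\ldots,x_l]]$ and $B^{ab}=B/[B,B]^{cl}$ is finite dimensional (being a quotient of $B$), with $B^{ab}\cong k[[x_1,\ldots,x_l]]/(\bar g_{l+1},\ldots,\bar g_n)$, where $\bar g_j$ is the image of $g_j$. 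Thus $(\bar g_{l+1},\ldots,\bar g_n)$ is an ideal of the regular local ring $k[[x_1,\ldots,x_l]]$ of Krull dimension $l$ with Artinian quotient, hence is $\fm$-primary of height $l$; by Krull's height theorem every generating set of a height-$l$ ideal has at least $l$ members, so at least $l$ of the $\bar g_j$, and hence at least $l$ of the $g_j$, are nonzero. Since there are only $n-l$ of them, $n-l\geq l$, that is $l\leq n/2$.

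The main obstacle is the cyclic-derivative bookkeeping of the second paragraph, compounded by the fact that $F$ is not Noetherian, so sums of closed ideals need not be closed: one must work with closures throughout and verify that $\rJ(F,\Phi)+I$ and $I+(g_{l+1},\ldots,g_n)$ share a common closure that contains $\rJ(F,\Phi)$ (so that $B$ is genuinely a quotient of $\Lm(F,\Phi)$) and whose image in $R$ is $(g_{l+1},\ldots,g_n)^{cl}$. Everything past that identity is routine commutative algebra; the only conceptual ingredient is the idea of killing the high variables and recognizing the $g_j$ there as cyclic derivatives.
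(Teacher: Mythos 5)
Your proof is correct and takes essentially the same route as the paper's: the paper applies in one step the surjection $\tau\colon F\to k[[x_1,\ldots,x_l]]$ that kills $x_{l+1},\ldots,x_n$ and abelianizes, notes (without your explicit cyclic-derivative bookkeeping, which the paper dismisses as ``clearly'') that the image of the Jacobi ideal is the finite-codimensional ideal generated by $\tau(g_{l+1}),\ldots,\tau(g_n)$, and concludes by Krull's height theorem exactly as you do. The only quibble is notational: the commutative ring $k[[x_1,\ldots,x_l]]$ is $R$ modulo the closed two-sided \emph{ideal} generated by commutators, not the vector-space quotient $R/[R,R]^{cl}$ (distinct necklaces can have the same abelianization), but this does not affect the argument.
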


\begin{proof}
Let $k[[x_1,\ldots, x_l]]$ be the commutative algebra of power series in $l$  indeterminates. Let $\mathfrak{a}$ be the image of the Jacobi ideal ${\rm J}(F,\Phi)$ under the algebra homomorphism $\tau: F \to k[[x_1,\ldots, x_l]]$ given by $x_i\mapsto x_i$ for $i=1,\ldots, l$ and $x_i\mapsto 0$ for $i=l+1,\ldots, n$. Clearly, $\mathfrak{a}$ is a finite codimensional  proper ideal of  $k[[x_1,\ldots, x_l]]$ generated by $\tau(g_{l+1}),\ldots, \tau(g_n)$.   By the  well-known Krull's height theorem, $\mathfrak{a}$ has at least $l$ generators as a two-sided ideal of $k[[x_1,\ldots, x_l]]$, so there are at least $l$ nonzero power series among $\tau(g_{l+1}),\ldots, \tau(g_n)$. The result follows immediately.
\end{proof}

\begin{lemma}\label{weight-homo-eigenvector}
Let $\Phi\in F_{\ol{\cy}}$ be a superpotential of order $\geq 3$ such that the Jacobi algebra associated to $\Phi$ is finite dimensional. Suppose that   $\Phi_{\#}(\xi) = b \cdot \Phi$ for some nonzero $b\neq0$ and some semisimple  derivation $\xi\in \der_k^+(F)$ that has $x_1,\ldots, x_n$ as eigenvectors. 
Then $\Phi$ is weighted-homogeneous of type $(r_1,\ldots, r_n)$ for some rational numbers $r_1,\ldots, r_n$ lie strictly between $0$ and $1/2$. 
\end{lemma}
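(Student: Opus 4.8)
The plan is to show that the eigenvalues $a_1,\ldots,a_n$ of $\xi$ on the generators $x_1,\ldots,x_n$ can be normalized, after dividing by $b$, to rational numbers $r_i = a_i/b$ which all lie strictly in $(0,1/2)$, and then to observe that $\Phi_{\#}(\xi)=b\cdot\Phi$ forces $\Phi$ to be supported on monomials of weighted degree $b$, i.e.\ weighted-homogeneous of type $(r_1,\ldots,r_n)$ after rescaling. First I would use Proposition \ref{eigenvector-decom}(2) to develop the canonical representative $f$ of $\Phi$ into $\xi$-eigencomponents $f=\sum_a f_a$; since each word $x_{i_1}\cdots x_{i_p}$ is an eigenvector with eigenvalue $a_{i_1}+\cdots+a_{i_p}$, the relation $\pi(\xi(f)-bf)=0$ together with Lemma \ref{zero-criterion-develop} shows that $f_a\in[F,F]^{cl}$ for every $a\neq b$; as $f$ is in canonical form this gives $f=f_b$, so $f$ is a linear combination of words $x_{i_1}\cdots x_{i_p}$ with $a_{i_1}+\cdots+a_{i_p}=b$. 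Thus once we know the $a_i/b$ are positive rationals, weighted-homogeneity of the claimed type is immediate.

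The substance is therefore to prove that each $r_i:=a_i/b$ is a rational number strictly between $0$ and $1/2$. For rationality I would argue that the finite-dimensionality of $\Lm(F,\Phi)$ forces, for each $i$, the existence of at least one monomial in $f=f_b$ in which $x_i$ actually occurs (otherwise $x_i$ would be a non-nilpotent element not hit by any cyclic derivative, contradicting finite dimension — more precisely, if $x_i$ appears in no monomial of $f$ then $D_j\Phi$ never involves producing relations that kill powers of $x_i$, so $\Lm(F,\Phi)$ surjects onto $k[[x_i]]$, which is infinite-dimensional); applying this and solving the resulting linear system $a_{i_1}+\cdots+a_{i_p}=b$ over $\QQ$ (the system has a solution in $k$, namely the $a_i$, and since it is defined over $\QQ$ and $\QQ$ is a subfield of $k$, it has a solution over $\QQ$, and because $b\neq0$ we may rescale so all weights are $a_i/b$) yields $r_i\in\QQ$. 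For the bounds: $r_i>0$ follows because $\xi\in\der_k^+(F)$ is semisimple with eigenvectors generating $\fm$, but actually the positivity needs the order $\geq 3$ hypothesis together with a sign/normalization argument — I expect this is where one invokes a symmetry of the set of eigenvalues. The upper bound $r_i<1/2$ is exactly where Lemma \ref{word-constitution} enters: if some $r_i\geq 1/2$, then collecting the variables of weight $\geq 1/2$ into a set $\{x_{l+1},\ldots,x_n\}$ and writing $\Phi=\pi(g_{l+1}x_{l+1})+\cdots+\pi(g_nx_n)+\pi(h)$ as in that lemma (every weight-$b$ monomial containing a heavy variable either contains exactly one such variable at the end — after cyclic rotation — with a prefix from the light variables, or contains two, landing in $h$), Lemma \ref{word-constitution} gives $l\geq n/2$ forcing at least half the variables heavy; pushing this further, if even one $r_i\geq 1/2$ one can iterate to get a contradiction with the light variables alone supporting a finite-dimensional quotient, since a weight-$b$ monomial ending in a single heavy variable of weight exactly $1/2\cdot b$ must have its prefix of weight exactly $1/2\cdot b$ in light variables, and a monomial ending in a heavy variable of weight $>1/2\cdot b$ is impossible as the prefix would need negative weight.

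The main obstacle I anticipate is the strict upper bound $r_i<1/2$ (equivalently, that no variable can carry weight $\geq$ half the total): the naive argument only bounds how many heavy variables there can be, and one must combine Lemma \ref{word-constitution} with the structure of the weight-$b$ monomials in $f$ — in particular the fact that each monomial must end (up to cyclic permutation) in a light variable when there is a heavy one, or else contribute to $h$ — to push the codimension count in $k[[x_1,\ldots,x_l]]$ to a contradiction. A secondary subtlety is the clean reduction of "$a_i/b$ solves a $\QQ$-linear system" to "$a_i/b\in\QQ$": this is standard linear algebra over a field extension (the solution set of a consistent linear system defined over $\QQ$ is a $\QQ$-rational affine subspace, hence meets $\QQ^n$), but one must make sure the system is genuinely consistent over $\QQ$ and that $b\neq 0$ is used to normalize. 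Finally, the order $\geq 3$ hypothesis should be invoked to rule out degenerate low-degree situations (e.g.\ quadratic terms, which would force some $r_i=1/2$ with a partner $r_j=1/2$), consistent with the strict inequality in the conclusion.
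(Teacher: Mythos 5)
Your reduction in the first half is sound and matches the paper: developing the canonical representative $f$ into $\xi$-eigencomponents and using Lemma \ref{zero-criterion-develop} (the paper argues even more directly, noting that $\xi(f)$ and $bf$ are both canonical representatives of $b\cdot\Phi$ and hence equal) correctly shows that every word occurring in $f$ has eigenvalue exactly $b$. But the rationality step as you state it is wrong: the numbers $a_i/b$ need not be rational, and you cannot ``rescale so all weights are $a_i/b$'' after invoking the existence of a rational solution --- these are two different tuples. What is actually needed, and what your parenthetical about $\QQ$-rational affine subspaces correctly points toward, is to \emph{replace} $(a_1/b,\ldots,a_n/b)$ by some rational tuple satisfying the same integer linear relations; the paper does this by expanding $(a_1,\ldots,a_n,b)$ in a $\QQ$-basis of $\QQ a_1+\cdots+\QQ a_n+\QQ b$ and taking a suitable column of the coefficient matrix, normalized by its $b$-entry. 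This slip is repairable, but the argument must be rewritten so that the $r_i$ are never identified with $a_i/b$.

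The genuine gap is the proof that $0<r_i<1/2$. You explicitly concede that you do not have the positivity argument (``I expect this is where one invokes a symmetry of the set of eigenvalues'' --- no such symmetry is used), and your upper-bound sketch misquotes Lemma \ref{word-constitution} (it gives $l\le n/2$, not $l\ge n/2$) and never reaches a contradiction. The missing idea is that the two bounds are proved \emph{together} by an interlocking count. Set $P_\varepsilon=\#\{i: r_i\le-\varepsilon\}$ and $Q_\varepsilon=\#\{i: r_i\ge 1/2+\varepsilon\}$. A word of $f$ has degree $\ge3$ and weight exactly $1$, so it cannot be built only from letters of weight $\le-\varepsilon$; Lemma \ref{word-constitution} then produces at least $P_\varepsilon$ indices $i$ for which $f$ contains a word $g_ix_i$ with $g_i$ of degree $\ge2$ in those letters, forcing $r_i\ge 1+2\varepsilon$ and hence $P_\varepsilon\le Q_{1/2+2\varepsilon}$; a symmetric application gives $Q_\varepsilon\le P_{2\varepsilon}$. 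Combining, $P_\varepsilon\le P_{4\varepsilon+1}$ and $Q_\varepsilon\le Q_{2\varepsilon+1/2}$, so a single index with $r_i\le0$ or $r_i\ge1/2$ would generate indices with $|r_j|$ arbitrarily large, which is impossible for a finite set. Without this (or an equivalent) mechanism, neither the positivity nor the strict bound $r_i<1/2$ is established, and the lemma is not proved.
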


\begin{proof} 
By assumption, $\xi(x_i) =a_ix_i$ for $i=1,\ldots, n$, where $a_i\in k$. Let $c_1,\ldots, c_p$ be a basis of the vector space $\mathbb{Q}a_1+\cdots +\mathbb{Q}a_n+\mathbb{Q}b$ over $\mathbb{Q}$. Then
\[
(a_1,\ldots, a_n,b)^T = D\cdot (c_1,\ldots, c_p)^T
\]
for some matrix $D=(d_{ij})$ of type $(n+1)\times p$ with rational number entries. Since $b\neq 0$, the last row of $D$ is nonzero. Without lost of generality, we may assume $d_{n+1,1}\neq0$. Define
\[
(r_1,\ldots, r_n):=  (d_{1,1}/d_{n+1,1},\ldots, d_{n1}/d_{n+1,1}).
\]
Clearly, for any integers $m_1,\ldots, m_n$, if $m_1a_1+\cdots m_na_n=b$ then $(m_1,\ldots, m_n,-1) \cdot D =0$ and hence $m_1r_1+\cdots +m_nr_n=1$. Let $f$ be the canonical representative of $\Phi$. One has
$$\xi(f) =bf$$  
because $\xi(f)$ and $bf$ are both canonical representative of $b\cdot \Phi$.  It follows that for any word $w=x_{i_1}\cdots x_{i_s}$ that occurs in $f$, one has
$$
m_1a_1+\cdots +m_na_n = a_{i_1} +\cdots +a_{i_s} =b,
$$
 where $m_i$ is the occurrences of $x_i$ in the word $w$, and therefore
$$r_{i_1}+\cdots+r_{i_s}= m_1r_1+\cdots +m_nr_n =1.$$ 
It remains to show $0<r_1,\ldots, r_n<1/2$.

Now for any real number $\varepsilon\geq0$, let $P_\varepsilon$ (resp. $Q_\varepsilon$) be the number of  indexes $i$  among $1,\ldots, n$ such that $r_i\leq -\varepsilon$ (resp. $r_i\geq 1/2+\varepsilon$). We claim that  for every real number $\varepsilon \geq0$,
\[
P_\varepsilon \leq Q_{2\varepsilon+1/2} \quad \text{and} \quad Q_\varepsilon \leq P_{2\varepsilon}.
\]
To see the first inequality, we may assume $r_1,\ldots, r_{P_\varepsilon} \leq -\varepsilon$, up to permutation on indeterminates. Then $f$ contains no word constitutes with letters $x_1,\ldots, x_{P_\varepsilon}$.  By Lemma \ref{word-constitution} and the assumption that all terms of $f$ has degree $\geq3$, there are at least $P_\varepsilon$ indexes $i$ among $P_\varepsilon+1,\ldots, n$ such that $r_i\geq 1+2\varepsilon$, and so  $P_\varepsilon \leq Q_{2\varepsilon+1/2}$. The second inequality can be proved similarly.
 
From the above two inequalities, one has $P_\varepsilon \leq P_{4\varepsilon+1}$ and $Q_{\varepsilon} \leq Q_{2\varepsilon+1/2}$ for every real number $\varepsilon\geq0$. It follows that $P_0=Q_0=0$, or otherwise the finite set $\{r_1,\ldots, r_n\}$ is not bounded, which is absurd. Consequently, all rational numbers $r_1,\ldots, r_n$ lie strictly between $0$ and $1/2$. 
\end{proof}

\begin{proof}[\textbf{Proof of the equivalence statement of Theorem \ref{Saito}}] 
The backward implication is Lemma \ref{weighted-imply-quasi}. Next we proceed to show the forward implication. 

Assume that $\Phi$ is quasi-homogeneous. By Lemma \ref{quasi-derivation},  
$$\Phi_{\#}(\xi) = \Phi$$
for some  derivation $\xi\in \der_k^+(F)$. Then by Lemma \ref{semisimple-preimage}, 
\[
\Phi_{\#}(\xi_S) =\Phi.
\]
Choose an automorphism $H\in \Aut_k(F)$ such that the derivation $\Ad_H\xi_S=H\circ \xi_S \circ H^{-1}$ has $x_1,\ldots, x_n$ as eigenvectors. Note that 
$$
H(\Phi)_{\#} (\Ad_H\xi_S)= H(\Phi). 
$$
Then by Lemma \ref{weight-homo-eigenvector}, $H(\Phi)$ is weighted-homogeneous of type $(r_1,\ldots,r_n)$ for some rational numbers $r_1,\ldots,r_n$ lie strictly  between $0$ and $1/2$.  The result follows.
\end{proof}

To see the uniqueness statement of Theorem \ref{Saito}, we need the following lemma.
\begin{lemma}\label{semisimple-unique}
Let $\Phi\in F_{\ol{\cy}}$ be a  superpotential of order $\geq3$ such that the Jacobi algebra associated to $\Phi$ is  finite dimensional.  Given two semisimple derivations $\xi,\eta\in \der_k^{+}(F)$ that commute with each other, if $\Phi_{\#}(\xi) =\Phi_{\#}(\eta)$  then $\xi=\eta$.
\end{lemma}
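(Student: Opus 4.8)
The plan is to reduce to the case where $\xi$ and $\eta$ are simultaneously diagonalized, and then to run a ``zoom-out'' estimate on the resulting weights, in the spirit of the proof of Lemma \ref{weight-homo-eigenvector}.

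First I would invoke Proposition \ref{diagonalizable-simultaneous}: since $\xi,\eta$ are commuting semisimple derivations, there is $H\in\Aut_k(F)$ with $(\Ad_H\xi)(x_i)=a_ix_i$ and $(\Ad_H\eta)(x_i)=b_ix_i$ for scalars $a_i,b_i\in k$. Replacing $(\Phi,\xi,\eta)$ by $(H(\Phi),\Ad_H\xi,\Ad_H\eta)$ is harmless: $H$ preserves the order, preserves finite-dimensionality of the Jacobi algebra by Proposition \ref{Jacobi-transform}, and the induced automorphism $H_*$ of $F_{\ol{\cy}}$ satisfies $H(\Phi)_{\#}\circ\Ad_H=H_*\circ\Phi_{\#}$, so the hypothesis and the conclusion are transported intact. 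Thus I may assume $\xi,\eta$ themselves are diagonal. Put $\zeta:=\xi-\eta$; this is again semisimple, with $\zeta(x_i)=c_ix_i$ where $c_i:=a_i-b_i$, and $\Phi_{\#}(\zeta)=\Phi_{\#}(\xi)-\Phi_{\#}(\eta)=0$. It now suffices to show $c_1=\cdots=c_n=0$.

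Next I would extract a numerical constraint. Let $f$ be the canonical representative of $\Phi$, a formal combination of standard words all of degree $\geq3$. Since the (continuous) derivation $\zeta$ scales each word $w=x_{i_1}\cdots x_{i_p}$ by the scalar $c_w:=c_{i_1}+\cdots+c_{i_p}$, it sends $f$ to another formal combination of standard words; as $\zeta(f)\in[F,F]^{cl}$, uniqueness of canonical representatives forces $\zeta(f)=0$, i.e.\ $c_w=0$ for every word $w$ occurring in $f$. Suppose toward a contradiction that $c_{i_0}\neq0$ for some $i_0$. Choose a $\QQ$-linear functional $\lambda$ on the finite-dimensional $\QQ$-subspace $\sum_i\QQ c_i\subseteq k$ with $\lambda(c_{i_0})\neq0$, and set $d_i:=\lambda(c_i)\in\QQ$. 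Then the $d_i$ are not all zero and satisfy $d_{i_1}+\cdots+d_{i_p}=0$ for every word $x_{i_1}\cdots x_{i_p}$ occurring in $f$.

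Finally I would derive a contradiction from Lemma \ref{word-constitution}, exactly as in Lemma \ref{weight-homo-eigenvector} (but now with target sum $0$ instead of $1$). For $\varepsilon>0$ set $P_\varepsilon:=\#\{i:d_i\leq-\varepsilon\}$ and $Q_\varepsilon:=\#\{i:d_i\geq\varepsilon\}$. After permuting the generators so that $d_1,\ldots,d_{P_\varepsilon}\leq-\varepsilon$, no word of $f$ uses only $x_1,\ldots,x_{P_\varepsilon}$ (its $d$-sum would be strictly negative), so $\Phi=\pi\big(\sum_{i>P_\varepsilon}g_ix_i\big)+\pi(h)$ with $g_i\in k\lgg x_1,\ldots,x_{P_\varepsilon}\rgg$ and every monomial of $h$ of degree $\geq2$ in $x_{P_\varepsilon+1},\ldots,x_n$; moreover $P_\varepsilon<n$, for otherwise $f=0$, against the order hypothesis. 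By Lemma \ref{word-constitution} at least $P_\varepsilon$ of the $g_i$ are nonzero, and each nonzero $g_i$ produces a word $u x_i$ of $f$ with $u$ a word of length $\geq2$ in $x_1,\ldots,x_{P_\varepsilon}$; since this word has $d$-sum $0$ we get $d_i=-\sum(\text{letters of }u)\geq2\varepsilon$. Hence $P_\varepsilon\leq Q_{2\varepsilon}$, and symmetrically $Q_\varepsilon\leq P_{2\varepsilon}$. Combining these with the monotonicity $P_{4\varepsilon}\leq P_\varepsilon$ gives $P_\varepsilon=P_{4\varepsilon}=P_{16\varepsilon}=\cdots=P_{4^m\varepsilon}$ for all $m$, and the right-hand side is $0$ once $4^m\varepsilon>\max_i|d_i|$; thus $P_\varepsilon=0$ for all $\varepsilon>0$, and likewise $Q_\varepsilon=0$, so all $d_i=0$ — contradicting the choice of $\lambda$. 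Therefore $\zeta=0$, i.e.\ $\xi=\eta$. The one point needing care is the bookkeeping around Lemma \ref{word-constitution}: verifying that the degree-zero component (in the ``missing'' variables) of $\Phi$ indeed vanishes, that each nonzero $g_i$ produces a word of length $\geq3$, and disposing of the degenerate cases $P_\varepsilon=n$ and $Q_\varepsilon=n$.
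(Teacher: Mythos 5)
Your proof is correct, but it takes a genuinely different route from the paper's. Both arguments begin the same way, invoking Proposition \ref{diagonalizable-simultaneous} to simultaneously diagonalize $\xi$ and $\eta$ and then exploiting uniqueness of canonical representatives to turn the hypothesis into exact numerical constraints on the eigenvalues. From there the paper stays inside $k$: using Lemma \ref{word-constitution} only in the weak case $l=1$, it extracts for each $i$ a single monomial of $f$ of the form $x_i^a$ ($a\geq 3$) or $x_i^bx_px_i^c$ ($b+c\geq 2$), assembles the exponent vectors into an $n\times n$ matrix $A$ with $A\cdot(r_1,\ldots,r_n)^T=A\cdot(s_1,\ldots,s_n)^T$, and concludes $r=s$ because $A$ is strictly diagonally dominant, hence invertible. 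You instead pass to the difference $\zeta=\xi-\eta$ with $\zeta(f)=0$, project the eigenvalues to $\QQ$ via a linear functional, and rerun the $P_\varepsilon/Q_\varepsilon$ bounding argument of Lemma \ref{weight-homo-eigenvector} with target sum $0$ in place of $1$; this uses Lemma \ref{word-constitution} in its full strength and requires the rationality reduction (an ordered field is needed for the inequalities), but it makes the proof structurally uniform with Lemma \ref{weight-homo-eigenvector}. The paper's matrix trick is shorter and more self-contained; your version buys conceptual economy by reusing machinery already in place. All the delicate points you flag (the degenerate cases $P_\varepsilon=n$, $Q_\varepsilon=n$, the transport of hypotheses under $\Ad_H$, and the fact that each nonzero $g_i$ contributes a word of length $\geq 3$ with prescribed $d$-sum) check out.
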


\begin{proof}
By Proposition \ref{diagonalizable-simultaneous}, we may assume $\xi$ and $\eta$ both have $x_1.\ldots, x_n$ as eigenvectors with eigenvalue $r_1,\ldots, r_n$ and $s_1,\ldots, s_n$ respectively. 

Let $f$ be the canonical representative of $\Phi$. We claim that for each $1\leq i\leq n$, the formal series $f$ either has a monomial of the form $x_i^a$ for some $a\geq 3$ or has a monomial with exactly one occurrence of letters other than $x_i$.
Indeed, if the first case doesn't happen, then 
$$\Phi =\pi(f) = \sum_{p\neq i} \pi( g_p\cdot  x_p)  +  \pi(h), $$ with $g_p\in k\lgg x_i \rgg$ and with all monomials in $h$ has at least two occurrences in letters other than $x_i$.  By Lemma \ref{word-constitution}, there is at least one $p$ such that $g_p\neq0$, so the claim follows. 

Construct an $n\times n$ matrix $A=(a_{ij})$ with entries in $\mathbb{N}$ as follows. For each $1\leq i \leq n$, choose a monomial  in $f$ either of the form $x_i^a$ for some $a\geq 3$ or of the form $x_i^bx_px_i^c$ with $b+c\geq 2$ and $p\neq i$. Such a choose is assured by the above argument. Define the $i$-the row of $A$ to be $a e_i$ or $(b+c) e_i +e_p$, according to the choice of the monomial, where $e_i, e_p$ denote the canonical coordinate. Since $\xi(f) =\eta(f) $, it follows that
$$A\cdot (r_1,\ldots, r_n)^T =A \cdot (s_1,\ldots, s_n)^T.$$  Moreover, since 
$$a_{ii} > \sum_{p\neq i} a_{ip}, \quad i=1,\ldots, n ,$$  it follows that $A$ is an invertible matrix. Therefore $r_i=s_i$ for $i=1,\ldots,n$ and hence $\xi=\eta$.
\end{proof}

\begin{proof}[\textbf{Proof of the uniqueness statement of Theorem \ref{Saito}}]
Replacing $\Phi$ by an appropriate superpotential in its orbit,  we may assume $\Phi$ is itself weighted-homogeneous of type $r=(r_1,\ldots,r_n)$  with $r_1\leq \ldots \leq r_n$. Suppose that  
$H(\Phi)$ is weighted-homogeneous of type  $s=(s_1,\ldots, s_n)$  for some automorphisms $H$ of $F$. To see the result we must show that $r=s$ up to permutations.

 Let $\xi$ be the semisimple derivation of $F$ given by $\xi(x_i) =r_ix_i$, and let $\zeta := \Ad_{H^{-1}}\eta$, where $\eta$ is the semisimple derivation  given by $\eta(x_i) =s_ix_i$. 
Develop $\zeta(x_i)$ in eigenvectors of $\xi$ as $$\zeta(x_i)=\sum_a \zeta(x_i)_a,\quad i=1,\ldots, n.$$
Then define  for each eigenvalue $u$  of $\xi$ a derivation $\zeta_u \in \der_k^+(F)$ by
$\zeta_u(x_i) = \zeta(x_i)_{r_i+u}$.

Let $f$ be the canonical representative of $\Phi$.  Then $\xi(f) =f$ and
\[
f= \zeta(f) = \sum_u \zeta_u(f) \mod [F,F]^c
\]
where $u$ runs over all eigenvalues of $\xi$. It is easy to check that $\zeta_u(f)$ is an eigenvector of $\xi$ with eigenvalue $1+u$. Then by Lemma \ref{zero-criterion-develop},  one gets
\begin{eqnarray}\label{equation-zero}
\zeta_0(f)=f \mod [F,F]^{cl} 
\end{eqnarray}
and 
\begin{eqnarray}\label{equation-nonzero}
 \zeta_u(f) =0 \mod [F,F]^{cl}, \quad u\neq 0.
\end{eqnarray}
It is easy to check that $[\xi, \zeta_0]=0$. So $[\xi, (\zeta_0)_S]=0$ by Theorem \ref{decom-derivation}. One has $\Phi_{\#}(\zeta_0) =\Phi$ by Equation (\ref{equation-zero}), and hence $\Phi_{\#} ((\zeta_0)_S)=\Phi$ by  Lemma \ref{semisimple-preimage}. In addition,  $\Phi_{\#}(\xi) =\Phi$. Therefore,  
\[
\xi=(\zeta_0)_S
\]
by Lemma  \ref{semisimple-unique}. Thus the characteristic polynomial of the induced endomorphism of $\zeta_0$ on $\fm/\fm^2$  is
\[
(t-r_1)(t-r_2)\cdots (t-r_n).
\]
Note that the characteristic polynomial of the induced endomorphism of $\zeta$ on $\fm/\fm^2$, which equals to that of the induced endomorphism of  $\eta$ on $\fm/\fm^2$, is 
\[
(t-s_1)(t-s_2)\cdots (t-s_n).
\]
It remains to show that the induced linear endomorphisms  of $\zeta$ and $\zeta_0$ on $\fm/\fm^2$, denoted by $\widetilde{\zeta}$ and $\widetilde{\zeta_0}$ respectively, have the same characteristic polynomial.

We  first claim that the linear part of $\zeta(x_i)_{r_i+u} =\zeta_u(x_i)$  is zero  for $u<0$. Indeed, since $$\sum_{i=1}^n \zeta_u(x_i) \cdot D_{x_i} (f) = \zeta_u(f)\mod [F,F]^{cl},$$ it follows from equation  (\ref{equation-nonzero}) that 
\begin{eqnarray*}
\sum_{i=1}^n \zeta_u(x_i) \cdot D_{x_i} (f) =0 \mod [F,F]^{cl}, \quad u\neq 0.
\end{eqnarray*} 
Let  $\iota: F\to k[[x_1,\ldots, x_n]]$ be the algebra map given by $x_i\mapsto x_i$. Then $\iota(D_{x_1}(f)), \ldots, \iota(D_{x_n}(f))$ generates a finite codimensional ideal of $k[[x_1,\ldots, x_n]]$ and so they form a parameter system. By \cite[Theorem 8.21A (a,c)]{Hart}, any permutation of the sequence $\iota(D_{x_1}(f)), \ldots, \iota(D_{x_n}(f))$ is regular. Since
\[
\sum_{i=1}^n \iota(\zeta_u(x_i)) \cdot \iota(D_{x_i} (f) )=0, \quad u\neq 0,
\]
it follows that for each $1\leq i\leq n$ one has 
\[
\iota(\zeta_u(x_i)) \in \Big ( \iota(D_{x_1}(f)), \ldots, \wh{\iota(D_{x_i}(f))},\ldots, \iota(D_{x_n}(f)) \Big), \quad u\neq0.
\]
Since $D_{x_1}(f),\ldots, D_{x_n}(f)$ are all eigenvectors of $\xi$ of eigenvalue $\geq 1/2$ but $\zeta_u(x_i)$ is an eigenvector of $\xi$ of eigenvalue $r_i+u<1/2$ for $u<0$, it follows that 
$$\iota(\zeta_u(x_j)) =0, \quad u<0.$$
Since the linear part of $\zeta_u(x_i)$ coincide with the linear part of $\iota(\zeta_u(x_i))$, the claim follows.

Now note that $ r_{1} =\ldots =r_{l_1} < r_{l_1+1} = \ldots =r_{l_2} <\ldots < r_{l_{p-1}+1} =\ldots =r_n$ for
 some integers $0=l_0< \ldots< l_p=n$. By the above claim, for $l_q+1\leq i \leq l_{q+1}$ one has 
\begin{eqnarray*}
\widetilde{\zeta}(x_i) &=& \sum_{j=l_q+1}^{l_{q+1}} a_{ji}\cdot  x_j  + \sum_{j> l_{q+1}} a_{ji} \cdot x_j \\ 
\widetilde{\zeta_0}(x_i) &=& \sum_{j=l_q+1}^{l_{q+1}} a_{ji}\cdot  x_j.
\end{eqnarray*}
Compare the matrices of $\widetilde{\zeta}$ and $\widetilde{\zeta_0}$ with respect to the basis $x_1,\ldots, x_n$, one gets that  the characteristic polynomial of $\widetilde{\zeta}$ and $\widetilde{\zeta_0}$ are equal. This completes the proof.
\end{proof}

\begin{remark}
Let  $\iota: F\to k[[x_1,\ldots, x_n]]$ be the algebra homomorphism given by $x_i\mapsto x_i$ for $i=1,\ldots, n$.  It induces a map $\tilde{\iota}: F_{\ol{\cy}} \to k[[x_1,\ldots, x_n]]$. We call  $\tilde{\iota}(\Phi)$ the \emph{abelianization of $\Phi$} for any superpotential $\Phi\in F_{\ol{\cy}}$.  It is easy to check the following statements:
\begin{enumerate}
\item[(1)] The abelianization of right equivalent superpotentials are right equivalent as power series;
\item[(2)] The abelianization of a weighted-homogeneous superpotential is weighted-homogeneous of the same type as a power series;
\item[(3)] The abelianization of a quasi-homogeneous superpotential  is quasi-homogeneous as a power series. 
\end{enumerate}
Here the term ``right equivalence'' and ``weighted-homogeneous''  for power series are defined  in the obvious way, and a power series is called quasi-homogenous if it is contained in the ideal generated by its partial derivatives.
Note that these terminologies are not quite the same as that of \cite{Sait}. 

From the above three statements, the uniqueness statement of Theorem \ref{Saito} follows immediately from \cite[Lemma 4.3]{Sait}. However, we give a direct demonstration as above for completeness and reader's convenience.  Our argument is essential the same as that of Saito's, but with more details.  Of course,  some tricks are employed to deal with the noncommutativity.
In addition, our argument used Lemma \ref{semisimple-unique} (and hence Proposition \ref{diagonalizable-simultaneous}), which has an interest in its own right.
\end{remark}


\end{document}